\documentclass[12pt]{article}
\pdfoutput=1
\usepackage{Max}
\usepackage{tikz-cd}

\usepackage[shortlabels]{enumitem}

\usepackage{mathrsfs}
\usepackage{tcolorbox}
\definecolor{lightgrey}{rgb}{0.9,0.9,0.9}
\definecolor{lightblue}{rgb}{.8,.9,.95}
\definecolor{lightgreen}{rgb}{.7,1,.8}
\definecolor{lightorange}{rgb}{1,.9,.7}
\usepackage{empheq}

\title{A Concrete Variant of the Twistor Theorem}
\author[1]{Laura Fredrickson\thanks{lfredric@uoregon.edu}}
\author[2]{Max Zimet\thanks{maxzimet@gmail.com}}
\affil[1]{Department of Mathematics,

University of Oregon, Eugene, OR 97403 USA}

\affil[2]{The Voleon Group,

Berkeley, CA 94704 USA}

\date{}

\definecolor{coolcolor}{rgb}{0.6,0,1}

\definecolor{greencolor}{rgb}{0,0.6,0.5}

  \theoremstyle{plain}
 
\newinttheorem{cor}{Corollary}
\newinttheorem{lem}{Lemma}
\newinttheorem{prop}{Proposition}
\newinttheorem{conj}{Conjecture}
 \newinttheorem{axiom}{Axiom}
 \newinttheorem{ass}{Assumption}
 \theoremstyle{definition}
 \newinttheorem{defn}{Definition}
 \newinttheorem{notation}{Notation}
 \newinttheorem{ex}{Example}
 \newinttheorem{constr}{Construction} 
 \newtheorem*{thm*}{Theorem}
 \theoremstyle{remark}
 \newinttheorem{rmk}{Remark}
 \newinttheorem{rem}{Remark}
 
\def\beq{\begin{eqnarray}}
\def\eeq{\end{eqnarray}}
 \newcommand{\bp}{\begin{proof}[Proof]}
 \newcommand{\ep}{\end{proof}}

\newcommand\nc{\newcommand}
\nc\mc\mathcal
\nc\mb\mathbb
\nc\Up\Upsilon

 \DeclareFontFamily{U}{wncy}{}
    \DeclareFontShape{U}{wncy}{m}{n}{<->wncyr10}{}
    \DeclareSymbolFont{mcy}{U}{wncy}{m}{n}
    \DeclareMathSymbol{\Sh}{\mathord}{mcy}{"58} 

\usepackage{tocloft}

\begin{document}

\maketitle

\begin{abstract}
In this note, we prove a concrete variant of the twistor theorem of Hitchin--Karlhede--Lindstr\"om--Ro\v{c}ek which applies when one already has the real manifold on which one wishes to construct a hyper-K\"ahler structure, and so one does not need to construct it as a parameter space of twistor lines. 
\end{abstract}

\hypersetup{linkcolor=blue}

\section{Introduction}\label{sec:introduction}

Given a hyper-K\"ahler manifold $\M$ of real dimension $4n$ one can associate a twistor manifold $\mathcal{Z}$ of real dimension $4n+2$ that as a complex manifold encodes the $\mathbb{P}^1$-family of complex structures on $\M$.  The hyper-K\"ahler structure gives a $\mathbb{P}^1$-family of holomorphic symplectic structures $(\M, J^{(\zeta)}, \varpi^{(\zeta)})$ on $\M$, and in turn $\mathcal{Z}$ carries a twisted fiberwise holomorphic symplectic form $\varpi(\zeta)$.
Conversely, 
given a twistor manifold $\mathcal{Z}$---including the data of $\varpi(\zeta)$---, the celebrated twistor theorem of Hitchin--Karlhede--Lindstr\"om--Ro\v{c}ek \cite{hitchin:hkSUSY} produces a pseudo-hyper-K\"ahler metric on the space of all real holomorphic sections of $\mathcal{Z}$ having normal bundle isomorphic to $\mathcal{O}(1)^{\oplus 2r}$. This is a powerful way of producing pseudo-hyper-K\"ahler metrics.

\bigskip

We prove a concrete variant of the twistor theorem that applies when one already has the real manifold $\M$ that one wishes to construct a pseudo-hyper-K\"ahler structure on.  We will start with the data of a $\CC^\times$-family of holomorphic symplectic forms on the real manifold $\M$ (see Definition \ref{def:holoSympDef}) of the shape
\be \varpi(\zeta) = - \frac{i}{2\zeta} \omega_+ + \omega_3 - \frac{i}{2} \zeta \omega_- \ee 
where $\omega_-=\bar \omega_+$ and $\bar \omega_3=\omega_3$ and $\omega_+$ is also a holomorphic symplectic form on the real manifold $\M$. 
Given this data, we prove there is a pseudo-hyper-K\"ahler structure on $\M$ (Theorem \ref{thm:twist}b). Moreover, we prove that if $\varpi(\zeta)$ are extracted from a pseudo-hyper-K\"ahler structure on $\M$, the construction recovers the original pseudo-hyper-K\"ahler structure (Theorem \ref{thm:twist}c). 

Giving slightly more detail, in the statement of Hitchin--Karlhede--Lindstr\"om--Ro\v{c}ek's twistor theorem, one begins with a twistor space $p: \mathcal{Z} \to \mathbb{P}^1$ with a real structure carrying a compatible twisted fiberwise holomorphic symplectic form $\varpi(\zeta)$, which is a holomorphic section of  
 $\wedge^2 T^*_{(1,0)} \M \otimes p^*\mathcal{O}(2) \to \Z$;  their theorem produces a pseudo-hyper-K\"ahler metric on the space of all real holomorphic sections of $\mathcal{Z}$ having normal bundle isomorphic to $\mathcal{O}(1)^{\oplus 2r}$.  If one instead begins with the above data on $\M$ and makes the twistor manifold, it was not clear to us that the sections corresponding to points of $\M$ automatically have normal bundle isomorphic to $\mathcal{O}(1)^{\oplus 2r}$ or whether an additional assumption was necessary.  
 Our proof shows that the normal bundle condition is satisfied automatically!  The concrete assumptions on $\varpi(\zeta)$ above imply the  induced complex structures $J^{(\zeta)}$ on $\M$ fit together nicely in the sense that there is an isomorphism $\kappa: T^{(0,1)}\M^{(0)}\to T^{(1,0)}\M^{(0)}$ such that $T^{(0,1)} \M^{(\zeta)} = (1+\zeta \kappa) T^{(0,1)} \M^{(0)}$ for all $\zeta\in\CC$, where $\M^{(\zeta)}$ is the complex manifold $(\M, J^{(\zeta)})$.  This isomorphism implies the normal bundle condition.
 
 Rather than appeal to \cite[Theorem 3.3]{hitchin:hkSUSY} and prove that the condition on the normal bundle is satisfied, we take the opportunity to translate the proof in \cite{hitchin:hkSUSY} to the real manifold $\M$ itself. It is delightfully straightforward!  After all, a hard part of the twistor theorem is the construction of the real manifold. Since we begin with $\M$, there is no sense in constructing it again. Moreover, it is now very clear that if one applies Hitchin--Karlhede--Lindstr\"om--Ro\v{c}ek's twistor theorem or our concrete variant of it to a twistor space $\Z$ obtained from a pseudo-hyper-K\"ahler manifold that one recovers the original hyper-K\"ahler structure (Theorem \ref{thm:twist}c, Corollary \ref{cor:match}).  

\bigskip

Finally, we explain our own motivation.  For us, this is a preliminary note in a series of papers making rigorous Gaiotto--Moore--Neitzke's conjectural machine for translating enumerative data (as well as data describing the semi-flat limit) into explicit smooth hyper-K\"ahler manifolds near semi-flat limits. The hyper-K\"ahler metrics that arise in this way naturally come as a $\CC^\times$-family of holomorphic symplectic forms $\varpi(\zeta)$ satisfying the above assumptions. Moreover, the Gaiotto--Moore--Neitzke formalism gives a construction of $\kappa$. Our concrete version of the twistor theorem is very convenient since it doesn't require the construction of $\kappa$. 

Readers are referred to \cite[\S3F]{hitchin:hkSUSY}.  
In places, our presentation closely follows Andy Neitzke's course notes\cite{neitzke:higgsNotes}.

\bigskip

Readers familiar with hyper-K\"ahler structures should skip directly to Proposition \ref{prop:varpiRealJ} and the following.

\subsubsection*{Acknowledgements.} 
We thank
J. Fine, R. Mazzeo and A. Neitzke for helpful conversations.
We are also grateful to R. Mazzeo for helpful comments on a draft of this work. LF is partially supported by NSF grant DMS-2005258.
This material is based upon work supported by the National Science
Foundation under Grant No. DMS-1928930, while LF was in
residence at the Simons Laufer Mathematical Sciences Institute
(formerly MSRI) in Berkeley, California, during the Fall 2024
semester.

\section{Background}

In this first section, we establish our conventions and explain how a pseudo-hyper-K\"ahler structure gives a family of holomorphic symplectic structures. This section contains no new results though we clarify some aspects of the proof in \cite{hitchin:hkSUSY}  regarding holomorphic vs antiholomorphic subbundles (see the footnotes). The culmination is Lemma \ref{lem:realHol} in which we give a concrete identification between $\M$ and the real manifold that the twistor theorem produces from its twistor space.

A pseudo-hyper-K\"ahler\footnote{We allow for signatures which are not positive definite not because we have any special interest in them here, but because, as has been observed before (e.g. \cite{BielawskiFoscolo}) contrary to the statement of Theorem 3.3 in \cite{hitchin:hkSUSY}, the twistor formalism 
only guarantees the existence of a pseudo-hyper-K\"ahler structure. Note that for a pseudo-hyper-K\"ahler manifold, the signature will be of the form $(p, q)$ where $p, q$ are multiples of four.} manifold is a generalization of a hyper-K\"ahler manifold where we remove the condition that the non-degenerate metric $g$ be positive-definite. 

\begin{figure}[h!]
    \begin{centering} 
    \includegraphics[height=1.5in]{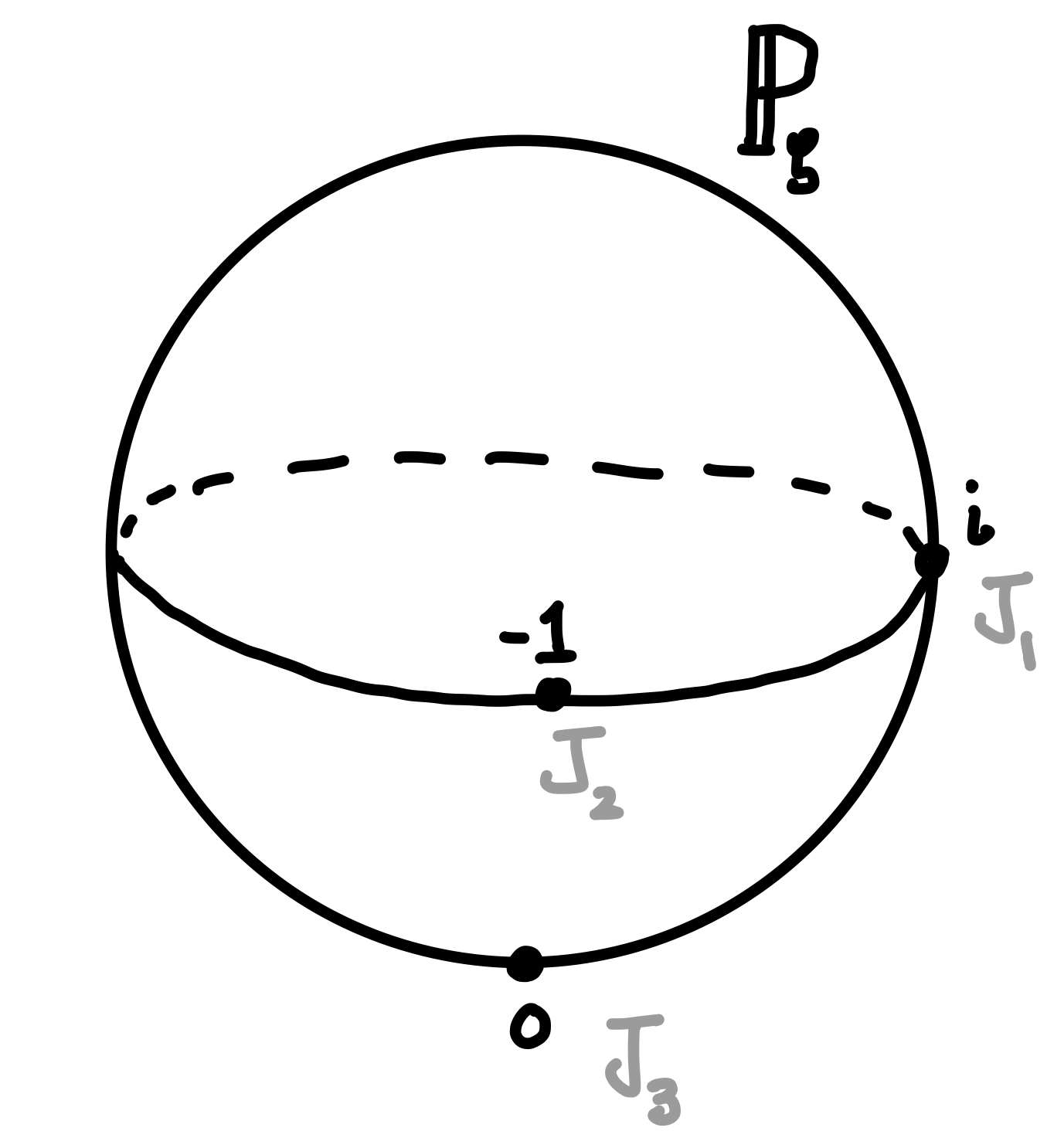} 
    \caption{\label{fig:diagram} 
    The twistor sphere.
    }
    \end{centering}
    \end{figure}

\begin{defn}
A \emph{pseudo-hyper-K\"ahler manifold} $\mathcal{M}$ is a pseudo-Riemannian manifold $(\mathcal{M}, g)$ (i.e. a manifold $\mathcal{M}$ equipped with a non-degenerate symmetric two-tensor $g$ which need not be positive-definite) equipped with a $\mathbb{P}^1$-worth of symplectic and complex structures
\be \omega^{(\zeta)} = \sum_{\alpha=1}^3 c_\alpha^{(\zeta)} \omega_\alpha \ , \quad J^{(\zeta)} = \sum_{\alpha=1}^3 c_\alpha^{(\zeta)} J_\alpha  \label{eq:zetaKahler} \ee
determined by a triplet of symplectic structures $(\omega_1, \omega_2, \omega_3)$ and a triplet of complex structures $(J_1, J_2, J_3)$
satisfying the unit quaternion algebra relations
\be J_\alpha J_\beta = \epsilon_{\alpha\beta\gamma} J_\gamma - \delta_{\alpha\beta} \ , \label{eq:quat} \ee
where $\epsilon_{\alpha\beta\gamma}$ is the totally antisymmetric tensor with $\epsilon_{123}=1$ and $\delta_{\alpha\beta}$ is the Kronecker delta.
Here,  $c^{(\zeta)}$ is a unit vector in $\RR^3$ which is related to $\zeta\in \PP^1$ via stereographic projection:
\be c^{(\zeta)} = \frac{1}{1+|\zeta|^2}(2\Imag \zeta, -2\Real \zeta, 1-|\zeta|^2) \ , \quad \zeta = \frac{i c_1^{(\zeta)}-c_2^{(\zeta)}}{1+c_3^{(\zeta)}} \ . \label{eq:stereo} \ee
We additionally impose that for $\alpha=1, 2, 3$, $(g, \omega_\alpha, J_\alpha)$ satisfy the (usual) pseudo-K\"ahler compatibility condition, which we state in three ways:
\be \omega_\alpha(v_1, v_2) = g(J_\alpha v_1, v_2) \ , \quad g(v_1,v_2) = \omega_\alpha(v_1, J_\alpha v_2) \ , \quad J_\alpha = - g^{-1} \omega_\alpha \ . \ee
In the last equation, we regard $g$ and $\omega_\alpha$ as isomorphisms from $T\M$ to $T^* \M$, so that $J_\alpha$ is an automorphism of $T\M$.
\end{defn}

All of the above data may be recovered from the three pseudo-K\"ahler forms $\omega_\alpha$: for example, $J_\alpha^2=-1$ (no sum on $\alpha$) implies that $J_\alpha = \omega_\alpha^{-1}g$, and so we can recover $J_3$ and $g$ via $J_3 = J_1 J_2 = -\omega_1^{-1}\omega_2$ and $g = \omega_3 J_3 = - \omega_3 \omega_1^{-1} \omega_2$.

Note that the antipodal map $\zeta\mapsto -1/\bar\zeta$ negates the vector $c^{(\zeta)}$, and thus takes $J^{(\zeta)}$ to the conjugate complex structure $-J^{(\zeta)}$. 

\bigskip

We will refer to complex structures as orthogonal if their corresponding unit vectors \eqref{eq:stereo} are orthogonal in $\RR^3$.  Looking forward, we will make use of the following Lemma in the proof of Lemma \ref{prop:rotate}:

\begin{lemma}The unit quaternion algebra relations in \eqref{eq:quat} are preserved by the natural $SO(3)$ action on $S^2$ (which acts on $\zeta$ via $PSU(2)$ fractional linear transformations).\end{lemma}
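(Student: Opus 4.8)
The plan is to reduce the claim to two elementary identities satisfied by a rotation matrix. Unwinding the statement, an element of $SO(3)$ sends the triple $(J_1,J_2,J_3)$ to a new triple $(J_1',J_2',J_3')$ with $J_\alpha' = \sum_\beta R_{\alpha\beta}J_\beta$ for some $R\in SO(3)$ (which of $R$, $R^{-1}$, $R^{T}$ actually appears is a convention-dependent point that is immaterial here, since all three lie in $SO(3)$), and one must check that $(J_1',J_2',J_3')$ again satisfies \eqref{eq:quat}. Expanding the product bilinearly and using \eqref{eq:quat} for the unprimed structures gives
\be
J_\alpha' J_\beta' = \sum_{\mu,\nu} R_{\alpha\mu}R_{\beta\nu}\,J_\mu J_\nu = \sum_{\mu,\nu} R_{\alpha\mu}R_{\beta\nu}\Bigl(\sum_\gamma\epsilon_{\mu\nu\gamma}J_\gamma - \delta_{\mu\nu}\Bigr).
\ee
The $\delta_{\mu\nu}$ contribution is $-\sum_\mu R_{\alpha\mu}R_{\beta\mu} = -(RR^{T})_{\alpha\beta} = -\delta_{\alpha\beta}$ by orthogonality of $R$. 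For the $\epsilon$ contribution I would substitute $J_\gamma = \sum_\rho R_{\rho\gamma}J_\rho'$ (the inverse of the defining relation, again using $R^{T}R=\mathrm{Id}$) and invoke the determinant identity $\sum_{\mu,\nu,\gamma}\epsilon_{\mu\nu\gamma}R_{\alpha\mu}R_{\beta\nu}R_{\rho\gamma} = (\det R)\,\epsilon_{\alpha\beta\rho}$; since $\det R = 1$ this collapses to $\sum_\rho\epsilon_{\alpha\beta\rho}J_\rho'$. Combining the two contributions yields $J_\alpha' J_\beta' = \sum_\rho\epsilon_{\alpha\beta\rho}J_\rho' - \delta_{\alpha\beta}$, which is precisely \eqref{eq:quat} for the primed triple.

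Conceptually, this is just the observation that the real span of $\{\mathrm{Id},J_1,J_2,J_3\}\subset\mathrm{End}(T\M)$ is a copy of the quaternion algebra $\mathbb{H}$: the symmetric part of \eqref{eq:quat}, $J_\alpha J_\beta + J_\beta J_\alpha = -2\delta_{\alpha\beta}\,\mathrm{Id}$, encodes the quaternionic norm on the imaginary part, while the antisymmetric part encodes the cross product, and $SO(3)$ acts on the imaginary quaternions exactly by the linear maps preserving both---hence by algebra automorphisms, which certainly carry quaternion triples to quaternion triples. I would include this remark for intuition but present the short index computation above as the proof proper. The only place where care is genuinely required---and thus the ``main obstacle'', such as it is---is checking that the convention-dependent matrix implementing the $SO(3)$ action has determinant $+1$ and not merely $\pm1$: the sign of $\det R$ is exactly what separates automorphisms of $\mathbb{H}$ from anti-automorphisms, and an orientation-reversing transformation would flip the sign of $\epsilon_{\alpha\beta\gamma}$ and destroy the relations.

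Finally, to account for the parenthetical in the statement, I would record that under the stereographic identification \eqref{eq:stereo} of $S^2\subset\RR^3$ with the $\zeta$-sphere $\mathbb{P}^1$, the rotation action of $SO(3)$ on $S^2$ matches the action of $PSU(2)$ on $\mathbb{P}^1$ by fractional linear transformations, through the standard double cover $SU(2)\to SO(3)$; this is a routine computation directly from the formulas \eqref{eq:stereo}. This compatibility is not needed for the algebraic content of the lemma, but stating it allows later arguments, such as the proof of Lemma~\ref{prop:rotate}, to pass freely between the two descriptions of the symmetry.
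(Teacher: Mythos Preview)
Your argument is correct. The index computation using $RR^T=\mathrm{Id}$ for the $\delta$-term and the identity $\epsilon_{\mu\nu\gamma}R_{\alpha\mu}R_{\beta\nu}R_{\rho\gamma}=(\det R)\,\epsilon_{\alpha\beta\rho}$ for the $\epsilon$-term is the clean, self-contained way to verify the claim, and your remark that $\det R=+1$ is exactly what distinguishes automorphisms from anti-automorphisms of $\mathbb{H}$ is on point.

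The paper takes a different, and in fact somewhat less explicit, route: rather than verifying the preservation of \eqref{eq:quat} directly, it simply records the explicit $SU(2)$ adjoint action on the matrix $\left(\begin{smallmatrix} ic_3 & c_1+ic_2 \\ -c_1+ic_2 & -ic_3\end{smallmatrix}\right)$ and the corresponding fractional linear transformation $\zeta\mapsto\frac{u\zeta+v}{-\bar v\zeta+\bar u}$. The preservation of the quaternion relations is then implicit in the fact that this $SO(3)$ action arises as conjugation inside a matrix algebra (equivalently, conjugation by a unit quaternion in $\mathbb{H}$), hence is by algebra automorphisms---essentially your ``conceptual'' paragraph, but left unsaid. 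What the paper's version buys is the explicit parametrization by $(u,v)$, which is what actually gets used downstream in the proof of Lemma~\ref{prop:rotate}; what your version buys is a genuinely complete proof of the stated algebraic claim that does not presuppose the quaternionic interpretation. Your final paragraph on the $SO(3)$--$PSU(2)$ correspondence covers the same ground as the paper's explicit formulas, so nothing is lost.
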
\begin{proof} Explicitly, the $SU(2)$ adjoint action
\be \twoMatrix{ic_3^{(\zeta)}}{c_1^{(\zeta)}+i c_2^{(\zeta)}}{-c_1^{(\zeta)}+ic_2^{(\zeta)}}{-ic_3^{(\zeta)}} \mapsto \twoMatrix{u}{v}{-\bar v}{\bar u} \twoMatrix{ic_3^{(\zeta)}}{c_1^{(\zeta)}+ic_2^{(\zeta)}}{-c_1^{(\zeta)}+ic_2^{(\zeta)}}{-i c_3^{(\zeta)}} \twoMatrix{\bar u}{-v}{\bar v}{u} \ , \label{eq:psu2} \ee
where $|u|^2+|v|^2=1$ (and negating both $u$ and $v$ has no effect), corresponds to
\be \zeta \mapsto \frac{u\zeta+v}{-\bar v \zeta + \bar u} \ . \label{eq:flPSU2} \ee
In particular, we note that the $U(1)$ subgroup with $v=0$ rotates $J_1$ and $J_2$ into each other while fixing $J_3$. 
\end{proof}

\bigskip

If we want to refer to $\M$ as a complex manifold with the complex structure $J^{(\zeta)}$, we will use the notation $\M^{(\zeta)}$. Using this, we introduce the following complex vector bundles over the real manifold $\M \times \PP^1$: $T^{(1,0)}\M\to \M\times \PP^1$, $T^{(0,1)}\M\to \M\times \PP^1$, $T^*_{(1,0)}\M\to \M\times \PP^1$, and $T^*_{(0,1)}\M\to \M\times \PP^1$ whose restrictions to $\M\times\{\zeta\}$ are, respectively, the holomorphic tangent bundle $T^{(1,0)}\M^{(\zeta)}$, the anti-holomorphic tangent bundle $T^{(0,1)}\M^{(\zeta)}$, the holomorphic cotangent bundle $T^*_{(1,0)}\M^{(\zeta)}$, and the anti-holomorphic cotangent bundle $T^*_{(0,1)}\M^{(\zeta)}$. The fibers are related as follows:
\begin{proposition}
\label{prop:iso0tozeta}\hfill\begin{enumerate}[(a)]\item 
The map $1+\zeta J_1$ is an isomorphism from $T^{(0,1)} \M^{(0)}$ to $T^{(0,1)} \M^{(\zeta)}$. 
\item Similarly, the map
$1-\zeta J_1'$ is an isomorphism from $T^*_{(1,0)} \M^{(0)}$ to $T^*_{(1,0)} \M^{(\zeta)}$,
where primes here indicate transposes --- e.g., $(J_1' \sigma)(v)=\sigma(J_1 v)$.\end{enumerate}
\end{proposition}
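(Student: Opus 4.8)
The plan is to prove (a) by a direct computation on the complexified tangent bundle and then deduce (b) from it by dualizing. Since $c^{(0)}=(0,0,1)$ we have $J^{(0)}=J_3$, so (taking $T^{(1,0)}$ to be the $+i$-eigenbundle of the complex structure) $T^{(0,1)}\M^{(0)}=\{v:J_3v=-iv\}$. The key observation is that on this subbundle $J_1$ and $J_2$ act proportionally: from $J_1J_2=J_3$ and $J_1^2=-1$ one gets $J_2=-J_1J_3$, hence $J_2v=-J_1J_3v=iJ_1v$ for every $v\in T^{(0,1)}\M^{(0)}$; in particular $J_3J_1v=J_2v=iJ_1v$, so $J_1v\in T^{(1,0)}\M^{(0)}$.

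With this in hand, substitute $v+\zeta J_1v$ into $J^{(\zeta)}=\sum_\alpha c_\alpha^{(\zeta)}J_\alpha$ and reduce the result to the span of $v$ and $J_1v$ using $J_1^2v=-v$, $J_2J_1v=-J_3v=iv$, $J_3J_1v=iJ_1v$, and the relation above. Inserting $c_1^{(\zeta)}=2\Imag\zeta/(1+|\zeta|^2)$, $c_2^{(\zeta)}=-2\Real\zeta/(1+|\zeta|^2)$, $c_3^{(\zeta)}=(1-|\zeta|^2)/(1+|\zeta|^2)$ and using $\Imag\zeta+i\Real\zeta=i\bar\zeta$, the coefficients collapse and one obtains $J^{(\zeta)}(v+\zeta J_1v)=-i(v+\zeta J_1v)$, so $(1+\zeta J_1)v\in T^{(0,1)}\M^{(\zeta)}$. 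For injectivity, if $(1+\zeta J_1)v=0$ then $v=-\zeta J_1v\in T^{(1,0)}\M^{(0)}\cap T^{(0,1)}\M^{(0)}=\{0\}$. Since $T^{(0,1)}\M^{(0)}$ and $T^{(0,1)}\M^{(\zeta)}$ have the same dimension (namely $\frac{1}{2}\dim_\RR\M$ in each fiber), an injective linear map of the former into the latter is an isomorphism onto it, proving (a).

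For (b), recall that $T^*_{(1,0)}\M^{(\zeta)}$ is the annihilator of $T^{(0,1)}\M^{(\zeta)}$ inside $T^*_\CC\M$. Given $\sigma\in T^*_{(1,0)}\M^{(0)}$ and any $v\in T^{(0,1)}\M^{(0)}$, a one-line computation using $(J_1'\sigma)(u)=\sigma(J_1u)$ and $J_1^2=-1$ gives $\big[(1-\zeta J_1')\sigma\big]\big((1+\zeta J_1)v\big)=(1+\zeta^2)\,\sigma(v)$, which vanishes because $\sigma$ annihilates $T^{(0,1)}\M^{(0)}$. By part (a) every element of $T^{(0,1)}\M^{(\zeta)}$ has the form $(1+\zeta J_1)v$, so $(1-\zeta J_1')\sigma$ annihilates $T^{(0,1)}\M^{(\zeta)}$, that is, $(1-\zeta J_1')\sigma\in T^*_{(1,0)}\M^{(\zeta)}$. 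Injectivity of $1-\zeta J_1'$ on $T^*_{(1,0)}\M^{(0)}$ is dual to the above: if $(1-\zeta J_1')\sigma=0$ then $\sigma=\zeta J_1'\sigma$, and $J_1'\sigma$ annihilates $T^{(1,0)}\M^{(0)}$ (since $J_1$ maps it into $T^{(0,1)}\M^{(0)}$), so $\sigma\in T^*_{(1,0)}\M^{(0)}\cap T^*_{(0,1)}\M^{(0)}=\{0\}$; equality of dimensions again upgrades injectivity to an isomorphism.

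The only real obstacle is bookkeeping: one has to fix conventions---which eigenbundle is $T^{(0,1)}$, and the exact form of $J^{(\zeta)}$ as a function of $\zeta$ and $\bar\zeta$---and then track the $\zeta$-versus-$\bar\zeta$ dependence carefully while simplifying the coefficients in (a). Once one notices that $J_1$ and $J_2$ agree up to the scalar $i$ on $T^{(0,1)}\M^{(0)}$, the computation collapses and nothing genuinely hard remains; the cancellation of the $\bar\zeta$-terms is precisely what lets the \emph{holomorphic} map $1+\zeta J_1$ do the job.
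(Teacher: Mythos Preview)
Your proof is correct and follows essentially the same route as the paper: for (a) you verify the eigenvector identity $J^{(\zeta)}(1+\zeta J_1)v=-i(1+\zeta J_1)v$ on $T^{(0,1)}\M^{(0)}$ and then use that $J_1$ swaps the $(1,0)$ and $(0,1)$ subspaces to get injectivity, exactly as the paper does (the paper additionally records an explicit inverse, but this is not needed). For (b) there is a minor stylistic difference: the paper argues in parallel to (a) by noting that $(-J_1',-J_2',-J_3')$ also satisfy the quaternion relations and deriving the cotangent analogue directly, whereas you deduce (b) from (a) via the annihilator pairing $\big[(1-\zeta J_1')\sigma\big]\big((1+\zeta J_1)v\big)=(1+\zeta^2)\sigma(v)$; both are immediate and amount to the same idea.
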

\begin{proof}
We first observe that $(-J_1', -J_2', -J_3')$ satisfy \eqref{eq:quat}.

Using \eqref{eq:stereo}, one can show that for all $\zeta\in \CC$,
\be J^{(\zeta)} (1+\zeta J_1)|_{T^{(0,1)} \M^{(0)}} = -i (1+\zeta J_1)|_{T^{(0,1)} \M^{(0)}} \ , \label{eq:holo1} \ee
i.e., $(1+\zeta J_1)$ maps $T^{(0,1)} \mathcal{M}^{(0)}$ to $T^{(0,1)} \mathcal{M}^{(\zeta)}$, the $-i$ eigenspace of $J^{(\zeta)}$;
and similarly,
\be (J^{(\zeta)})' (1-\zeta J_1')|_{T^*_{(1,0)} \M^{(0)}} = i (1-\zeta J_1')|_{T^*_{(1,0)} \M^{(0)}} \ . \label{eq:holo2} \ee
 Since $J_1$ is an isomorphism from $T^{(0,1)} \M^{(0)}$ to $T^{(1,0)}\M^{(0)}$, we find that $1+\zeta J_1$ is an injective map from $T^{(0,1)} \M^{(0)}$ to $T^{(0,1)} \M^{(\zeta)}$, and hence is an isomorphism. (Indeed, the inverse is \begin{equation}\label{eq:inverse}(1 + \zeta J_1)^{-1}:=\piecewise{\frac{1}{1+\zeta^2}(1-\zeta J_1)}{\zeta\not=\pm i}{\frac{1}{1-\zeta^2}(1+i\zeta J_2)}{\rm else}.\end{equation} The expression used in the $\zeta=\pm i$ case in fact works for all $\zeta\not=\pm 1$, as follows from writing $(1+\zeta J_1)|_{T^{(0,1)}\M^{(0)}} = (1-i\zeta J_2)|_{T^{(0,1)}\M^{(0)}}$). 
\end{proof}

\begin{corollary}[Corollary to Proposition \ref{prop:iso0tozeta}] \label{cor:holbundle} Let $(T^{(0,1)} \M^{(0)})_m$ be the  trivial bundle over $\PP^1$.
Then for each $m \in \M$,
\begin{enumerate}[(a)]
\item $(T^{(0,1)}\M)_m \cong (T^{(0,1)} \M^{(0)})_m \otimes \Oo(-1)$
\item $(T^*_{(1,0)}\M)_m\cong (T^*_{(1,0)}\M^{(0)})_m \otimes \Oo(-1)$
\end{enumerate}
\end{corollary}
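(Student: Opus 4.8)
The plan is to realize $(T^{(0,1)}\M)_m$ explicitly as a holomorphic subbundle of a \emph{trivial} bundle over $\PP^1$, using Proposition \ref{prop:iso0tozeta} to read off its fibers, and then to recognize that subbundle by inspection as $\Oo(-1)$ tensored with a fixed vector space. Fix $m\in\M$ and set $V:=(T^{(0,1)}\M^{(0)})_m$ and $\bar V:=(T^{(1,0)}\M^{(0)})_m$. The complexified tangent space $T_m\M\otimes\CC=\bar V\oplus V$ does not depend on $\zeta$, hence is a trivial bundle $\underline{\bar V\oplus V}$ over $\PP^1$, and $(T^{(0,1)}\M)_m$ is its subbundle of $(-i)$-eigenspaces of $J^{(\zeta)}$. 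By Proposition \ref{prop:iso0tozeta}(a), over $\zeta\in\CC$ this eigenspace is $(1+\zeta J_1)V=\{v+\zeta J_1 v:v\in V\}$, where, as observed in the proof of that proposition, $J_1$ restricts to an isomorphism $V\xrightarrow{\ \sim\ }\bar V$; and since $c^{(\infty)}=(0,0,-1)$ gives $J^{(\infty)}=-J_3=-J^{(0)}$, the fiber over $\zeta=\infty$ is $\bar V$. Because the family $\{v+\zeta J_1 v:v\in V\}$ may equally be written $\{J_1 u+\zeta^{-1}u:u\in V\}$, it extends holomorphically across $\zeta=\infty$ with that fiber, so $(T^{(0,1)}\M)_m$ is indeed a holomorphic subbundle of $\underline{\bar V\oplus V}$ with the fibers just described.

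Next I would exhibit the isomorphism directly. Let $\Oo(-1)\subset\underline{\CC^2}$ be the tautological line subbundle, with fiber $\langle(1,\zeta)\rangle$ over $\zeta\in\CC$ and $\langle(0,1)\rangle$ over $\infty$, and consider the $\zeta$-independent, fiberwise-invertible (because $J_1\colon V\to\bar V$ is invertible) bundle map
\[
\Phi\colon\ \underline{\CC^2}\otimes V\ \xrightarrow{\ \sim\ }\ \underline{\bar V\oplus V},\qquad (s,t)\otimes v\ \longmapsto\ tJ_1 v+sv .
\]
Then $\Phi$ carries $\Oo(-1)\otimes V$ fiberwise onto $\{v+\zeta J_1 v:v\in V\}=(1+\zeta J_1)V$ for $\zeta\in\CC$ and onto $\{J_1 v:v\in V\}=\bar V$ over $\zeta=\infty$ --- that is, onto $(T^{(0,1)}\M)_m$. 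Since $\Phi$ is holomorphic, this is an isomorphism of holomorphic bundles, establishing (a): $(T^{(0,1)}\M)_m\cong\Oo(-1)\otimes V=(T^{(0,1)}\M^{(0)})_m\otimes\Oo(-1)$. (Equivalently, trivializing by $v\mapsto(1+\zeta J_1)v$ near $\zeta=0$ and by $u\mapsto(J_1+\tilde\zeta)u$ near $\zeta=\infty$ with $\tilde\zeta=1/\zeta$, the transition function is $u=\zeta v$, i.e.\ $\zeta\cdot\mathrm{id}_V$, which is precisely the transition function of $\Oo(-1)$; this is the computation that pins the twist as $-1$ rather than $+1$.)

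For (b) I would run the identical argument inside the trivial bundle $T_m^*\M\otimes\CC=\underline{W\oplus\bar W}$, where $W:=(T^*_{(1,0)}\M^{(0)})_m$ and $\bar W:=(T^*_{(0,1)}\M^{(0)})_m$. By Proposition \ref{prop:iso0tozeta}(b) the fiber of $(T^*_{(1,0)}\M)_m$ over $\zeta\in\CC$ is $(1-\zeta J_1')W=\{\sigma-\zeta J_1'\sigma:\sigma\in W\}$, with $J_1'$ restricting to an isomorphism $W\xrightarrow{\ \sim\ }\bar W$; since $(J^{(\infty)})'=-J_3'$, the fiber over $\infty$ is $\bar W$; and the $\zeta$-independent map $(s,t)\otimes\sigma\mapsto s\sigma-tJ_1'\sigma$ carries $\Oo(-1)\otimes W$ onto $(T^*_{(1,0)}\M)_m$. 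This gives (b).

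I expect the only genuinely non-formal point to be the analysis at $\zeta=\infty$: one must check that the eigenspace description, which a priori holds only over the affine chart, extends holomorphically across the point at infinity with the correct fiber (the short rewriting above, using $J^{(\infty)}=-J^{(0)}$ and the coordinate $\tilde\zeta=1/\zeta$), and that $\Phi$ --- equivalently, the explicit transition function --- produces the twist $\Oo(-1)$ rather than $\Oo(+1)$. Everything else is bookkeeping inside a fixed vector space.
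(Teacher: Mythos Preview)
Your argument is correct and is precisely the natural fleshing-out the paper has in mind: the corollary is stated without proof, and the intended reasoning is exactly to use Proposition~\ref{prop:iso0tozeta} to see that the fibers vary as $(1+\zeta J_1)V$ (resp.\ $(1-\zeta J_1')W$), then read off the $\Oo(-1)$ twist from the linear-in-$\zeta$ dependence. Your explicit map $\Phi$ and the transition-function computation $u=\zeta v$ both pin the degree correctly, and your treatment of $\zeta=\infty$ via $J^{(\infty)}=-J^{(0)}$ is the right way to close the chart.
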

We can use the natural pairing of sections $$\Gamma((T^{(1,0)} \M)_m) \times  \Gamma((T^*_{(1,0)} \M)_m) \to \Gamma(\CC \times \PP^1 \to \PP^1)$$ to equip $(T^{(1,0)} \M)_m$ with the structure of a holomorphic bundle.
Namely, \begin{definition}\label{def:holrealdef1}A smooth section $s$ of $(T^{(1,0)} \M)_m$ 
is \emph{holomorphic}
if for all holomorphic sections $\sigma$ of $(T^*_{(1,0)} \M)_m$, $\sigma(s)$ is a holomorphic section of $\mathcal{O} \to \PP^1$, i.e. a constant.
\end{definition}

\begin{corollary}[Corollary to Corollary \ref{cor:holbundle}]\label{cor:duality}
Let $(T^{(0,1)} \M^{(0)})_m$ be the  trivial bundle over $\PP^1$.
Then for each $m \in \M$,
\begin{enumerate}[(a)]
\item $(T^{(1,0)}\M)_m \cong (T^{(1,0)} \M^{(0)})_m \otimes \Oo(1)$
\item $(T^*_{(0,1)} \M)_m \cong (T^*_{(0,1)}\M^{(0)})_m \otimes\Oo(1)$.\footnote{\label{foot:holvsantihol}Repeating (or conjugating the results of) the computations in \eqref{eq:holo1} and \eqref{eq:holo2}, one finds that these two bundles have natural \emph{anti-holomorphic} structures. However, we can straightforwardly relate these anti-holomorphic subbundle structures to the holomorphic bundle structures described in the main text. For example, since $(T^{(0,1)}\M)_m$ complements $(T^{(1,0)}\M)_m$ in $(T_\CC \M)_m\times \PP^1$, $(T^{(1,0)}\M)_m \cong (T_\CC \M)_m\times \PP^1/(T^{(0,1)}\M)_m$ gives $(T^{(1,0)}\M)_m$ the structure of a holomorphic vector bundle (this is equivalent to the construction via duality in the text, since $(0,1)$-vectors are precisely the vectors which are annihilated by all $(1,0)$-forms), but on the other hand the non-degeneracy of the pseudo-Riemannian metric allows us to characterize $(T^{(1,0)}\M)_m$ as the sub-bundle of $(T_\CC\M)_m$ which is orthogonal to all of $(T^{(0,1)}\M)_m$. If $v$ is a holomorphic local section of $(T^{(0,1)}\M)_m$ then the condition $g(\bar v, w)=0$ for $w\in (T_\CC\M)_m$ to be orthogonal to $v$ is anti-holomorphic in $\zeta$, and so this gives $(T^{(1,0)}\M)_m$ a natural anti-holomorphic subbundle structure, which---using the isomorphism of complex bundles $T^{(1,0)} \M^{(0)} \simeq T^{(0,1)} \M^{(\infty)}$---we denote $(T^{(1,0)}\M)_m \simeq (T^{(0,1)} \M^{(\infty)})_m \otimes \overline{\mathcal{O}(-1)}$. It is worth noting, as a sanity check, that an anti-holomorphic $\Oo(-1)$ bundle and a holomorphic $\Oo(1)$ bundle are isomorphic as complex line bundles.}
\end{enumerate}
\end{corollary}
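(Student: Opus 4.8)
The plan is to derive Corollary \ref{cor:duality} from Corollary \ref{cor:holbundle} purely by a duality/line-bundle bookkeeping argument, rather than repeating the eigenspace computations of Proposition \ref{prop:iso0tozeta}. For part (a), I would start from the observation in Corollary \ref{cor:holbundle}(b) that $(T^*_{(1,0)}\M)_m \cong (T^*_{(1,0)}\M^{(0)})_m \otimes \mathcal{O}(-1)$ as holomorphic bundles over $\PP^1$. Since $(T^{(1,0)}\M)_m$ is, by Definition \ref{def:holrealdef1}, the holomorphic bundle whose holomorphic sections are precisely those smooth sections pairing to constants against all holomorphic sections of $(T^*_{(1,0)}\M)_m$, it is by construction the holomorphic dual of $(T^*_{(1,0)}\M)_m$. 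Dualizing the isomorphism of Corollary \ref{cor:holbundle}(b) then gives
\be (T^{(1,0)}\M)_m \cong \big((T^*_{(1,0)}\M^{(0)})_m\big)^* \otimes \mathcal{O}(-1)^* \cong (T^{(1,0)}\M^{(0)})_m \otimes \mathcal{O}(1), \ee
using that the fiberwise dual of the trivial bundle $(T^*_{(1,0)}\M^{(0)})_m$ is the trivial bundle $(T^{(1,0)}\M^{(0)})_m$ and that $\mathcal{O}(-1)^* \cong \mathcal{O}(1)$. I should say a word about why fiberwise/pointwise duality is compatible with the chosen holomorphic structures — this is essentially the content of Definition \ref{def:holrealdef1} together with the standard fact that for a holomorphic vector bundle $E$ on $\PP^1$ one has a canonical holomorphic structure on $E^*$ characterized by pairing holomorphic sections to holomorphic functions — so the "concrete" holomorphic structure of Definition \ref{def:holrealdef1} is exactly the holomorphic dual, and the isomorphism above is an isomorphism of holomorphic bundles.

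For part (b), I would proceed analogously, but now dualizing Corollary \ref{cor:holbundle}(a). By complex conjugation of the pairing of sections — or equivalently by observing that $T^*_{(0,1)}\M$ is the complex-conjugate bundle of $T^*_{(1,0)}\M$ at each fixed $\zeta$, or is dual to $T^{(0,1)}\M$ — one gets a natural holomorphic structure on $(T^*_{(0,1)}\M)_m$ (as noted in footnote \ref{foot:holvsantihol}, one must be a little careful about holomorphic versus anti-holomorphic structures here). The cleanest route is: $(T^*_{(0,1)}\M)_m$ is the holomorphic dual of $(T^{(0,1)}\M)_m$, via the natural pairing $\Gamma((T^{(0,1)}\M)_m) \times \Gamma((T^*_{(0,1)}\M)_m) \to \Gamma(\mathcal{O}\to\PP^1)$; then dualizing Corollary \ref{cor:holbundle}(a) and using $\mathcal{O}(-1)^*\cong\mathcal{O}(1)$ and triviality of $(T^*_{(0,1)}\M^{(0)})_m$ gives
\be (T^*_{(0,1)}\M)_m \cong (T^*_{(0,1)}\M^{(0)})_m \otimes \mathcal{O}(1). \ee
Alternatively one could give a direct proof by conjugating equations \eqref{eq:holo1}–\eqref{eq:holo2}: conjugating \eqref{eq:holo2} shows $1-\bar\zeta \bar{J_1'}$ identifies $T^*_{(0,1)}\M^{(0)}$ with $T^*_{(0,1)}\M^{(\zeta)}$, which as a section of a bundle over $\PP^1$ picks up the transition function of $\overline{\mathcal{O}(-1)}\cong\mathcal{O}(1)$; but the duality argument is shorter and avoids re-deriving the eigenvalue identities.

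The main obstacle — really the only subtle point — is making sure the holomorphic structures line up correctly: one is "dualizing" bundles over $\PP^1$, and one must confirm that the holomorphic structure on the dual that one gets from Definition \ref{def:holrealdef1} (pairing to constants) is genuinely the standard holomorphic-dual structure, and in part (b) that one is not accidentally conflating a holomorphic $\mathcal{O}(1)$ with an anti-holomorphic $\overline{\mathcal{O}(-1)}$. The footnote \ref{foot:holvsantihol} already signals that this is the delicate part of the whole setup; I would handle it by invoking the sanity check mentioned there — that an anti-holomorphic $\mathcal{O}(-1)$ bundle and a holomorphic $\mathcal{O}(1)$ bundle agree as complex line bundles, and more importantly that the duality pairing unambiguously fixes which holomorphic structure we mean. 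Everything else is a routine consequence of $\mathcal{O}(k)^* \cong \mathcal{O}(-k)$, the triviality of the $\zeta=0$ bundles, and the compatibility of tensoring with a line bundle and taking duals.
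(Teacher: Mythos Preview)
Your proposal is correct and matches the paper's approach: the paper states Corollary \ref{cor:duality} as an immediate consequence of Corollary \ref{cor:holbundle} via the duality pairing (Definition \ref{def:holrealdef1}) without giving a separate proof, and your argument spells out exactly this duality step, including the care about holomorphic versus anti-holomorphic structures flagged in footnote \ref{foot:holvsantihol}.
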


\begin{definition}\label{def:twistorspace}
Given a hyper-K\"ahler manifold $\M$, the \emph{twistor space} $\Z$ of $\M$ is the total space of
the holomorphic fibration $$p: \Z\to \PP^1$$ with fiber $\M^{(\zeta)}$, equipped with the inherited almost complex structure. 
\end{definition}
As a real manifold, $\Z$ is the product $\M\times \mathbb{P}^1$. \begin{proposition} \label{prop:twistoriscomplex}The twistor space $\Z$ is a complex manifold.\end{proposition}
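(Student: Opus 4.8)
The plan is to apply the Newlander--Nirenberg theorem. Recall that, as a real manifold, $\Z=\M\times\PP^1$, and the inherited almost complex structure $\mathbb{J}$ acts as $J^{(\zeta)}$ in the $\M$-directions and as the standard complex structure of $\PP^1$ in the base directions; it therefore suffices to show that $\mathbb{J}$ is integrable, i.e., that the $-i$-eigenbundle $T^{(0,1)}\Z\subset T_\CC\Z$ is closed under the Lie bracket of complex vector fields (equivalently, that the Nijenhuis tensor of $\mathbb{J}$ vanishes). Under this splitting, $T^{(0,1)}\Z=T^{(0,1)}\M\oplus T^{(0,1)}\PP^1$, where $T^{(0,1)}\M$ denotes the bundle over $\M\times\PP^1$ with fiber $T^{(0,1)}\M^{(\zeta)}$ over $(m,\zeta)$ introduced above, and $T^{(0,1)}\PP^1$ is spanned, in a local holomorphic coordinate $\zeta$, by $\partial_{\bar\zeta}$.

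I would then verify involutivity on the three kinds of brackets. For two vertical sections $V,W$ of $T^{(0,1)}\M$, the bracket $[V,W]$ is again vertical, and its restriction to a fiber $\M\times\{\zeta\}$ equals the Lie bracket on $\M$ of $V|_\zeta$ and $W|_\zeta$, which are $(0,1)$-vector fields for $J^{(\zeta)}$; since $J^{(\zeta)}$ is an integrable complex structure (by definition of the hyper-K\"ahler structure), $[V|_\zeta,W|_\zeta]$ is of type $(0,1)$ as well, so $[V,W]$ is a section of $T^{(0,1)}\M\subset T^{(0,1)}\Z$. The base-base bracket $[\partial_{\bar\zeta},\partial_{\bar\zeta}]=0$ is immediate, and since $\PP^1$ is one complex-dimensional there is nothing further to check there.

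The substantive point is the mixed bracket $[\partial_{\bar\zeta},V]$, and here Proposition \ref{prop:iso0tozeta} is precisely what is needed. Over an open set $U\subset\M$, with $\zeta$ a coordinate on $\PP^1\setminus\{\infty\}$, choose a local frame $e_1,\dots,e_{2r}$ of $T^{(0,1)}\M^{(0)}$, pulled back to $U\times\CC$ so as to be independent of $\zeta$, and put $V_a(m,\zeta):=e_a(m)+\zeta\,(J_1e_a)(m)$. By Proposition \ref{prop:iso0tozeta}(a) the $V_a$ form a frame of $T^{(0,1)}\M$ over $U\times\CC$; since their components are affine---hence holomorphic---in $\zeta$, we get $[\partial_{\bar\zeta},V_a]=0$. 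Writing a general vertical section as $V=\sum_a f_aV_a$ with $f_a\in C^\infty(U\times\CC)$ yields $[\partial_{\bar\zeta},V]=\sum_a(\partial_{\bar\zeta}f_a)\,V_a$, a section of $T^{(0,1)}\M$; and the Leibniz rule reduces $[V,W]$ for general vertical sections to the frame case already handled. The chart near $\zeta=\infty$ is identical after rewriting $1+\zeta J_1=\zeta J_1\,(1-J_1/\zeta)$, which exhibits $T^{(0,1)}\M^{(\zeta)}=J_1\,(1-(1/\zeta)J_1)\,T^{(0,1)}\M^{(0)}$ as an affine family in the coordinate $1/\zeta$ (with $J_1T^{(0,1)}\M^{(0)}=T^{(1,0)}\M^{(0)}=T^{(0,1)}\M^{(\infty)}$, consistent with the antipodal map $\zeta\mapsto-1/\bar\zeta$). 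Hence $T^{(0,1)}\Z$ is involutive and Newlander--Nirenberg applies.

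I expect the mixed bracket to be the only real obstacle: the identity $[\partial_{\bar\zeta},V_a]=0$ on a frame says exactly that the anti-holomorphic subbundle $T^{(0,1)}\M^{(\zeta)}\subset T_\CC\M$ depends holomorphically on $\zeta$, which is the content of the explicit isomorphism $1+\zeta J_1$ of Proposition \ref{prop:iso0tozeta}; the vertical-vertical bracket merely encodes integrability of each $J^{(\zeta)}$, which is built in, and the base direction is trivial.
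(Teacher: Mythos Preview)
Your argument is correct, but it takes the dual route to the paper's. The paper works on the cotangent side, checking $(dw)_{(0,2)}=0$ for all $(1,0)$-forms $w$, and uses the frame $w=(1-\zeta J_1')v$ with $v\in T^*_{(1,0)}\M^{(0)}$ from Proposition \ref{prop:iso0tozeta}(b); you work on the tangent side, checking involutivity of $T^{(0,1)}\Z$, using the frame $(1+\zeta J_1)e_a$ from Proposition \ref{prop:iso0tozeta}(a). Both handle the mixed direction identically: the frame is affine in $\zeta$, so there is no $d\bar\zeta$ contribution (equivalently, $[\partial_{\bar\zeta},V_a]=0$). The genuine difference is how the fiberwise piece is dealt with. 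You simply invoke that each $J^{(\zeta)}$ is by definition a complex structure, so the bracket of $(0,1)$-vector fields on $\M^{(\zeta)}$ is again $(0,1)$. The paper instead rewrites $d_\M$ as an exterior covariant derivative using the Levi-Civita connection and uses that $J_1,J_2,J_3$ (hence $J^{(\zeta)}$) are parallel to see that $\nabla_{\partial_{x^i}}(1-\zeta J_1')v$ stays of type $(1,0)$ on $\M^{(\zeta)}$. Your approach is more elementary and self-contained given the stated definition; the paper's approach has the virtue of isolating exactly what structure is needed --- a torsion-free connection for which $J_1,J_2,J_3$ are parallel --- which is why it naturally generalizes to hypercomplex manifolds (as the paper notes). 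One minor remark: your final clause ``the Leibniz rule reduces $[V,W]$ for general vertical sections to the frame case already handled'' is superfluous, since you already disposed of the vertical--vertical bracket in full generality in your second paragraph.
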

We give further properties of the twistor space $\Z$ in  Proposition \ref{prop:twistorprop} and Proposition \ref{prop:normal}.
\begin{proof}
To see that the almost complex structure on $\Z$ is integrable, we verify the criterion that $(dw)_{(0,2)}=0$ for all $(1,0)$-forms $w$.\footnote{Recall that an almost complex structure gives a decomposition of an $n$-form into $(p,q)$-forms for $p+q =n$. The operators $\partial$ and $\overline{\partial}$ are defined by 
\[ \partial|_{\Omega^{(p,q)}} = \pi_{(p + 1, q)} \circ \de |_{\Omega^{p,q}} \ , \qquad \overline{\partial}|_{\Omega^{(p,q)}} = \pi_{(p, q+1)} \circ \de |_{\Omega^{p,q}} .\]
An almost complex structure is integrable if, and only if, $\de = \partial + \overline{\partial}$.} This is clear if $w=d\zeta$, so we focus on $(1,0)$-forms of the form $w=(1-\zeta J_1')v$, where $v\in T^*_{(1,0)}\M^{(0)}$. (This is in a patch containing 0 but not $\infty$; the analogous results in a patch containing $\infty$ are similar.) We note that $dw=d_\M(1-\zeta J_1') v - d\zeta \wedge J_1' v$, where $d_\M$ is the exterior derivative only on $\M$. Because the Levi-Civita connection on $\M$ is torsion-free, we may equivalently regard $d_\M$ as the exterior covariant derivative $d_{\M,\nabla}$: $dw=dx^i\wedge \nabla_{\partial_{x^i}} (1-\zeta J_1') v - d\zeta\wedge J_1' v$, where $x^i$ is a choice of coordinates on $\M$.\footnote{There is no reason here that the Levi-Civita connection should be preferred --- the present argument works so long as there exists any torsion-free connection with respect to which $J_1$, $J_2$, and $J_3$ are parallel. This observation leads to the definition of a \emph{hypercomplex} manifold, which is a quaternionic analogue of a complex manifold for which there always exists a unique such connection \cite[Theorems 10.3 and 10.5]{obata:conn}.} Since $(J^{(\zeta)})'$ is covariantly constant, $(J^{(\zeta)})' \nabla_{\partial_{x^i}} (1-\zeta J_1') v = i \nabla_{\partial_{x^i}} (1-\zeta J_1') v$, and so $dw$ has no $(0,2)$ component, as claimed.\footnote{\label{footnote:correction}
We mention a pair of signs errors in  \cite{hitchin:hkSUSY} that cancelled since they were subtle and confused us. In \S3F therein (see the paragraph beginning with (3.80)), the authors state that $(T^{(1,0)}\M)_m$ is a holomorphic subbundle (as opposed to quotient bundle) of the trivial holomorphic bundle $(T_\CC\M)_m\times \PP^1$, essentially via a computation analogous to \eqref{eq:holo1}. They then claim that this subbundle is holomorphically a direct sum of $\Oo(1)$ bundles. This is not possible: $\Oo(1)$ cannot be a subbundle of a trivial bundle. The bundle they describe is actually a direct sum of $\Oo(-1)$ bundles, which as we have seen is the expected structure of $(T^{(0,1)}\M)_m$, not $(T^{(1,0)}\M)_m$. The reason for this discrepancy is that their orientation of the twistor sphere is opposite from that of \eqref{eq:stereo} (see their (3.70)). It is necessary to employ our orientation in order for the almost complex structure on $\Z$ to be integrable. The proof of this integrability which we have copied from \cite{hitchin:hkSUSY} uses \eqref{eq:holo2}; \cite{hitchin:hkSUSY} arrives at \eqref{eq:holo2} in spite of the conjugation of $\PP^1$ by assuming (see (3.74)) that the complex structures $(J_1',J_2',J_3')$ on $T^* \M$ satisfy \eqref{eq:quat}; however, it is actually $(-J_1',-J_2',-J_3')$ which satisfy \eqref{eq:quat}.} 
\end{proof}

Some of the holomorphic vector bundles we defined above over $\PP^1$ extend naturally to all of $\Z$: $T^{(1,0)} \M = \ker (p_*: T^{(1,0)}\Z\to T^{(1,0)}\PP^1)$ and $T^*_{(1,0)} \M \cong T^*_{(1,0)}\Z/p^* T^*_{(1,0)}\PP^1$. 

\bigskip

Finally, we turn to a powerful means of encoding the pseudo-hyper-K\"ahler structure. Let $V={\rm span}_\CC\{\omega_1,\omega_2,\omega_3\}$ be a subbundle of $\wedge^2 T_\CC\M\times \PP^1 \to \Z$. 
\begin{lemma}[c.f. {\cite[p.577]{hitchin:hkSUSY}}] 
\label{lem:L}Then, $L=V\cap \wedge^2 T^{(1,0)}\M$ is a complex line bundle over $\Z$, spanned by 
\be \varpi(\zeta) = - \frac{i}{2\zeta} \omega_+ + \omega_3 - \frac{i}{2} \zeta \omega_- ,  \ee
where $\omega_\pm = \omega_1 \pm i \omega_2$,
for $\zeta \in \CC^\times$; $\omega_+$ for $\zeta=0$; and $\omega_-$ for $\zeta=\infty$.

Moreover, the complex line bundle has the structure of a holomorphic line bundle bundle: restricting to a point $m\in \M$, $L_m \to \PP^1$ is isomorphic to $\Oo(-2)$; said another way, $\varpi(\zeta)$ is a holomorphic section of $\wedge^2 T^*_{(1,0)}\M \otimes p^* \Oo(2) \to \Z$. 
\end{lemma}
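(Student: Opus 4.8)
The plan is to prove the claim in three stages: first, that $L = V \cap \wedge^2 T^{(1,0)}\M$ is a line bundle spanned fiberwise by $\varpi(\zeta)$; second, that $L_m$ restricted to a point $m \in \M$ is isomorphic to $\mathcal{O}(-2)$ as a complex line bundle over $\PP^1$; and third, that the holomorphic structure is the one claimed, i.e. that $\varpi(\zeta)$ is a holomorphic section of $\wedge^2 T^*_{(1,0)}\M \otimes p^*\mathcal{O}(2)$.

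For the first stage, I would work pointwise on $\M \times \{\zeta\}$. Since $V$ has rank $3$, the intersection $V \cap \wedge^2 T^{(1,0)}\M^{(\zeta)}$ is computed by asking which linear combinations $a\omega_1 + b\omega_2 + c\omega_3$ lie in $\wedge^2 T^*_{(1,0)}\M^{(\zeta)}$, equivalently are killed by contraction with any $T^{(0,1)}\M^{(\zeta)}$ vector, equivalently have $(0,2)$- and $(1,1)$-components (with respect to $J^{(\zeta)}$) vanishing. Using Proposition \ref{prop:iso0tozeta} (or just the compatibility $\omega_\alpha = g J_\alpha$ together with the quaternion relations \eqref{eq:quat}) one checks directly that $\varpi(\zeta)$ as written is of type $(2,0)$ for $J^{(\zeta)}$: for $\zeta = 0$ this is the standard statement that $\omega_+ = \omega_1 + i\omega_2$ is the holomorphic symplectic form for $J_3 = J^{(0)}$, and for general $\zeta$ one conjugates by the $PSU(2)$ rotation of Lemma \ref{lem:quat} (or the isomorphism $1+\zeta J_1$) that carries $J^{(0)}$ to $J^{(\zeta)}$, tracking how the coefficients transform under \eqref{eq:flPSU2}. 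Counting dimensions — a nonzero real combination of the $\omega_\alpha$ cannot be of type $(2,0)$ and also of type $(0,2)$ unless it vanishes, and the space of $(2,0)$-forms inside $V$ is at most one complex dimension because $V \otimes_\RR \CC$ has complex dimension $3$ and the $(0,2)$ and $(1,1)$ parts each impose conditions — shows $L_\zeta$ is exactly one-dimensional, spanned by $\varpi(\zeta)$. The behavior at $\zeta = 0, \infty$ follows by clearing the pole: $-2i\zeta\,\varpi(\zeta) = \omega_+ - 2i\zeta\,\omega_3 - i\zeta^2\omega_-$ extends $\omega_+$ as the spanning section near $\zeta = 0$, and symmetrically near $\infty$.

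For the second stage, I compute the transition function of $L_m$ between the two standard charts of $\PP^1$. On the $\zeta$-chart the spanning section is $-2i\zeta\,\varpi(\zeta) = \omega_+ - 2i\zeta\omega_3 - i\zeta^2\omega_-$; on the $\tilde\zeta = 1/\zeta$-chart it is $-2i\tilde\zeta\,\varpi = \omega_- - 2i\tilde\zeta\omega_3 - i\tilde\zeta^2\omega_+$ (using $\omega(\tilde\zeta^{-1})$ and the reality $\omega_- = \bar\omega_+$). Comparing, the two generators differ by the factor $\zeta^2$ on the overlap $\CC^\times$, so the transition function is $\zeta^2$, which is exactly the clutching function of $\mathcal{O}(-2)$ in the convention where $\mathcal{O}(1)$ has transition function $\zeta^{-1}$. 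Equivalently — and this is the quicker route I would actually write — Corollary \ref{cor:duality}(a)-(b) gives $(\wedge^2 T^*_{(1,0)}\M)_m \cong (\wedge^2 T^*_{(1,0)}\M^{(0)})_m \otimes \mathcal{O}(-2)$, and $\varpi(\zeta)$, being pointwise the unique (up to scale) $(2,0)$-form in the fixed finite-dimensional space $V$, defines a nowhere-vanishing section of this bundle twisted by $\mathcal{O}(2)$; so $L_m$, as the line spanned inside $(\wedge^2 T^*_{(1,0)}\M)_m$, is the image of a trivial bundle under a map whose "defect from holomorphicity" is measured by $\mathcal{O}(2)$, forcing $L_m \cong \mathcal{O}(-2)$.

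For the third stage, I must check that $\varpi(\zeta)$, viewed as a section of $W := \wedge^2 T^*_{(1,0)}\M \otimes p^*\mathcal{O}(2)$, is holomorphic in the sense appropriate to that bundle. The cleanest approach is to dualize as in Definition \ref{def:holrealdef1}: pair $\varpi(\zeta)$ against pairs of holomorphic sections of $(T^{(1,0)}\M)_m \cong (T^{(1,0)}\M^{(0)})_m \otimes \mathcal{O}(1)$ from Corollary \ref{cor:duality}(a), and verify the result is a holomorphic section of $\mathcal{O}(2)$, i.e. a degree-$\le 2$ polynomial in $\zeta$. A holomorphic section of $(T^{(1,0)}\M)_m$ has the form $(1+\zeta J_1)s$ (pushed to the right eigenspace) with $s$ a suitable vector depending affinely on $\zeta$; contracting $\varpi(\zeta)$, which has the explicit $\zeta$-expansion above, against two such and using $\varpi(\zeta)|_{T^{(0,1)}\M^{(\zeta)}} = 0$ to kill the unwanted terms, one gets a Laurent polynomial in $\zeta$ that a priori could run from $\zeta^{-1}$ to $\zeta^{3}$; the vanishing of the $(0,1)$-directions and the explicit form of $\varpi$ cut this down to degree $\le 2$. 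The main obstacle — and the step I expect to absorb most of the care — is exactly this bookkeeping: matching the several $\mathcal{O}(\pm 1)$ and $\mathcal{O}(\pm 2)$ twists from Corollaries \ref{cor:holbundle} and \ref{cor:duality} consistently, getting the orientation/sign conventions of \eqref{eq:stereo} right (the same subtlety flagged in footnote \ref{footnote:correction}), and confirming that the pole at $\zeta = 0$ in $\varpi(\zeta)$ is precisely cancelled by the $p^*\mathcal{O}(2)$ twist rather than over- or under-corrected. Everything else is linear algebra fiber by fiber plus the transition-function computation.
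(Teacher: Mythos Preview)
Your outline is sound through Stage~2 and close in spirit to the paper's argument, but two points deserve comment.

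First, a small gap: your Stage~3 only verifies holomorphicity of $\varpi(\zeta)$ in the $\PP^1$-direction (at fixed $m$), but the claim is that $\varpi$ is holomorphic over all of $\Z$, so you also need $\bar\partial_{\M^{(\zeta)}}\varpi(\zeta)=0$ for each $\zeta$. The paper dispatches this in one line: $\varpi(\zeta)$ is a \emph{closed} $(2,0)$-form on $\M^{(\zeta)}$ (being a constant-coefficient combination of the closed forms $\omega_\alpha$), so $d\varpi=0$ forces the $(1,2)$-component $\bar\partial_{\M^{(\zeta)}}\varpi$ to vanish.

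Second, the paper's route through Stages~1--3 is considerably shorter than yours, and bypasses the bookkeeping you flag as the main obstacle. Rather than checking the $(2,0)$-type, the transition function, and the $\zeta$-holomorphicity separately, the paper proves the single identity
\[
\varpi(\zeta)(v,w) \;=\; -\tfrac{i}{2\zeta}\,\omega_+\bigl((1-\zeta J_1)v,\,(1-\zeta J_1)w\bigr)
\]
for all $v,w\in T_\CC\M$. Since $1-\zeta J_1$ maps $T^{(0,1)}\M^{(\zeta)}$ to $T^{(0,1)}\M^{(0)}$ (the inverse of Proposition~\ref{prop:iso0tozeta}(a)), this immediately shows $\varpi(\zeta)$ is of type $(2,0)$ on $\M^{(\zeta)}$. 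And since $(1-\zeta J_1')^{\otimes 2}$ is precisely the trivialization realizing $(\wedge^2 T^*_{(1,0)}\M)_m \cong (\wedge^2 T^*_{(1,0)}\M^{(0)})_m \otimes \Oo(-2)$ from Corollary~\ref{cor:holbundle}(b), the identity says that under this isomorphism $\varpi(\zeta)$ corresponds to the \emph{constant} section $\omega_+$ (times the tautological $\Oo(2)$ factor $-\tfrac{i}{2\zeta}$). Thus $L_m\cong\Oo(-2)$ and $\zeta$-holomorphicity are both automatic, with no degree-counting or cancellation of $\Oo(\pm 1)$ twists required. The one-dimensionality of $L$ the paper gets by reducing via the $SO(3)$-symmetry to $\zeta=0$, where one checks directly that $\omega_+,\omega_3,\omega_-$ are respectively of types $(2,0)$, $(1,1)$, $(0,2)$; this makes the decomposition of $V$ explicit and is cleaner than your dimension count.
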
\begin{proof}By the symmetry between the various complex structures, it suffices to compute the dimension of this bundle over $\M\times\{0\}$. When $\zeta=0$, the pseudo-K\"ahler form $\omega_3$ is a $(1,1)$-form, $\omega_+$ is a $(2,0)$-form, and $\omega_-$ is a $(0,2)$-form, where $\omega_\pm = \omega_1 \pm i \omega_2$. For, we have
\begin{align}
\omega_\pm(J_3 v, w) &= \omega_1(J_1 J_2 v, w) \mp i \omega_2(J_2 J_1 v, w) = -g(J_2 v, w) \pm i g(J_1 v, w) \nonumber \\
&= -\omega_2(v,w) \pm i \omega_1(v, w) = \pm i \omega_\pm(v,w) \ .
\end{align}
More generally, when $\zeta\in \CC^\times$, a similar computation demonstrates that $L$ is spanned by $\varpi(\zeta)$.
(And, when $\zeta=\infty$, $L$ is spanned by $\omega_-$.) Alternatively, one may show that this is a $(2,0)$-form on $\M^{(\zeta)}$ by proving that for all vectors $v,w$ we have
\be \varpi(\zeta)(v,w) = -\frac{i}{2\zeta} \omega_+((1-\zeta J_1)v,(1-\zeta J_1)w) \ee
and then using the fact that $1-\zeta J_1$ (from \eqref{eq:inverse}) is a map from $T^{(0,1)}\M^{(\zeta)}$ to $T^{(0,1)}\M^{(0)}$. 

The holomorphic line bundle structure on $L$ follows. Note that $\varpi(\zeta)$ is annihilated by the Dolbeault operator on $\M^{(\zeta)}$ because it is closed.
\end{proof}

Not only is $\varpi(\zeta)$ closed, but it is a holomorphic symplectic form on $\M^{(\zeta)}$.
To see this, we first claim the following:
\begin{lemma} \label{prop:rotate}
For every $\zeta\in \widetilde{\PP^1}$, the real blowup of $\PP^1$ at $0,\infty$, there are unique complex structures $K^{(\zeta)}$ and $I^{(\zeta)}$ such that $(K^{(\zeta)},I^{(\zeta)},J^{(\zeta)})$ are related to $(J_1,J_2,J_3)$ by an element of $SO(3)$ and
\be (|\zeta|+|\zeta|^{-1})^{-1}\varpi(\zeta) = \half(\omega_{K^{(\zeta)}} + i \omega_{I^{(\zeta)}}) \ , \label{eq:rescaledVarpi} \ee
where $\omega_{K^{(\zeta)}}$ and $\omega_{I^{(\zeta)}}$ are the associated pseudo-K\"ahler forms.
\end{lemma}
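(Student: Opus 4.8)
The plan is to exhibit the required $SO(3)$ rotation explicitly by tracking what the $PSU(2)$ action of the preceding Lemma does to $\varpi(\zeta)$. First I would recall that by definition $\varpi(\zeta) = -\frac{i}{2\zeta}\omega_+ + \omega_3 - \frac{i}{2}\zeta\omega_-$, and observe that the coefficient vector, when we write $\varpi(\zeta)$ in the basis $(\omega_1,\omega_2,\omega_3)$, is $\bigl(-\frac{i}{2\zeta} - \frac{i}{2}\zeta,\ \frac{1}{2\zeta} - \frac{1}{2}\zeta,\ 1\bigr)$; the idea is that this complex vector in $\CC^3$ is, up to the scalar $(|\zeta|+|\zeta|^{-1})$, of the form $\mathbf{a} + i\mathbf{b}$ with $\mathbf{a},\mathbf{b}$ a pair of orthonormal real vectors in $\RR^3$, so that $\mathbf{a}, \mathbf{b}, \mathbf{a}\times\mathbf{b}$ is an oriented orthonormal frame. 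That frame is precisely the image of the standard frame under the desired $R^{(\zeta)}\in SO(3)$, and $K^{(\zeta)} = R^{(\zeta)}\cdot(J_1,J_2,J_3)$ in the first slot, $I^{(\zeta)}$ in the second; we must also check $\mathbf{a}\times\mathbf{b} = c^{(\zeta)}$ so that the third slot is genuinely $J^{(\zeta)}$.

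Concretely I would carry this out in two steps. Step one: do the real-linear-algebra computation showing that $(|\zeta|+|\zeta|^{-1})^{-1}\bigl(-\frac{i}{2\zeta}-\frac{i}{2}\zeta,\ \frac{1}{2\zeta}-\frac{1}{2}\zeta,\ 1\bigr) = \frac12(\mathbf{a} + i\mathbf{b})$ for a unique oriented orthonormal pair $(\mathbf{a},\mathbf{b})$ with $\mathbf{a}\times\mathbf{b} = c^{(\zeta)}$, where $c^{(\zeta)}$ is the stereographic-projection vector of \eqref{eq:stereo}. Writing $\zeta = |\zeta|e^{i\theta}$ this is a short trigonometric check: the real part of the first entry and the imaginary part of the second assemble into a vector, and one verifies orthonormality and that the cross product reproduces \eqref{eq:stereo}. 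Step two: given such a frame, define $R^{(\zeta)}\in SO(3)$ to be the rotation carrying $(e_1,e_2,e_3)$ to $(\mathbf{a},\mathbf{b},c^{(\zeta)})$, set $(K^{(\zeta)}, I^{(\zeta)}, J^{(\zeta)}) := R^{(\zeta)}\cdot(J_1,J_2,J_3)$, and invoke the preceding Lemma to conclude that $(K^{(\zeta)},I^{(\zeta)},J^{(\zeta)})$ again satisfy the quaternion relations \eqref{eq:quat}, hence are genuine complex structures with associated pseudo-K\"ahler forms $\omega_{K^{(\zeta)}} = \sum a_\alpha \omega_\alpha$ and $\omega_{I^{(\zeta)}} = \sum b_\alpha \omega_\alpha$. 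Then \eqref{eq:rescaledVarpi} is immediate from step one, since $\half(\omega_{K^{(\zeta)}} + i\omega_{I^{(\zeta)}}) = \half\sum(a_\alpha + i b_\alpha)\omega_\alpha = (|\zeta|+|\zeta|^{-1})^{-1}\varpi(\zeta)$. Uniqueness of $K^{(\zeta)}$ and $I^{(\zeta)}$ follows because $\mathbf{a}$ and $\mathbf{b}$ are uniquely determined as the real and imaginary parts of the (rescaled) coefficient vector, and the $\omega_\alpha$ are linearly independent.

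One subtlety worth isolating: the statement is phrased over $\widetilde{\PP^1}$, the real blowup at $0$ and $\infty$, rather than over $\CC^\times$. This is exactly because the scalar $|\zeta|+|\zeta|^{-1}$ blows up as $\zeta\to 0,\infty$ while $\varpi(\zeta)$ itself degenerates there (it becomes $\omega_+$ or $\omega_-$, a $(2,0)$-form rather than a combination with a real $(1,1)$ piece), so the normalization $(|\zeta|+|\zeta|^{-1})^{-1}\varpi(\zeta)$ is what extends continuously, and the frame $(\mathbf{a},\mathbf{b})$ depends on $\arg\zeta$ even in the limit. I would check that as $\zeta\to 0$ along a ray of argument $\theta$ the pair $(\mathbf{a},\mathbf{b})$ has a well-defined limit depending on $\theta$, matching the boundary circle of the real blowup, so that $K^{(\zeta)}$ and $I^{(\zeta)}$ extend to $\widetilde{\PP^1}$; this is really the only place one has to be careful, and the main (mild) obstacle is simply organizing the trigonometry of step one cleanly rather than any conceptual difficulty.
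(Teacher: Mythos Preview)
Your plan is correct and is essentially the same argument as the paper's, just organized in the opposite direction. The paper first parametrizes all $R\in SO(3)$ whose third row is $c^{(\zeta)}$ by a residual phase $\theta$ (via the $PSU(2)$ picture of the preceding Lemma), computes $\omega_{K^{(\zeta)}}+i\omega_{I^{(\zeta)}} = \tfrac{2i\zeta}{1+|\zeta|^2}e^{2i\theta}\varpi(\zeta)$, and then solves for the unique $\theta\in\RR/\pi\ZZ$ giving \eqref{eq:rescaledVarpi}. You instead read the first two rows of $R$ directly off the real and imaginary parts of the rescaled coefficient vector of $\varpi(\zeta)$ and verify orthonormality and $\mathbf{a}\times\mathbf{b}=c^{(\zeta)}$ by hand; this bypasses the phase parametrization entirely. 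Your discussion of why the statement lives over the real blowup $\widetilde{\PP^1}$ (the frame $(\mathbf{a},\mathbf{b})$ depends on $\arg\zeta$ in the limit $|\zeta|\to 0,\infty$) is more explicit than the paper's and is a nice addition.
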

\begin{corollary} \label{cor:holsymp}The 2-form
$\varpi(\zeta)$ is a holomorphic symplectic form on $\M^{(\zeta)}$ --- i.e., it is non-degenerate, in the sense that it defines an isomorphism $T^{(1,0)}\M^{(\zeta)} \to T^*_{(1,0)} \M^{(\zeta)}$. 
\end{corollary}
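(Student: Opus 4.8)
The plan is to deduce Corollary \ref{cor:holsymp} directly from Lemma \ref{prop:rotate}. First I would observe that the statement is purely local and $SO(3)$-invariant, so it suffices to check non-degeneracy of $\varpi(\zeta)$ at each $\zeta\in\widetilde{\PP^1}$ after applying the rotation supplied by Lemma \ref{prop:rotate}. For $\zeta\in\CC^\times$ that lemma gives, up to the nonzero scalar $(|\zeta|+|\zeta|^{-1})^{-1}$, the identity $\varpi(\zeta) = \tfrac12(\omega_{K^{(\zeta)}} + i\,\omega_{I^{(\zeta)}})$, where $(K^{(\zeta)},I^{(\zeta)},J^{(\zeta)})$ is a quaternionic triple. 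So the argument reduces to the following elementary fact about a single hyper-K\"ahler point: if $(K,I,J)$ satisfy \eqref{eq:quat} and are compatible with $g$, then $\tfrac12(\omega_K + i\omega_I)$ is a non-degenerate $(2,0)$-form with respect to $J$. The cases $\zeta=0$ and $\zeta=\infty$ are handled the same way after passing to the real blowup: there $\varpi$ is $\omega_+ = \omega_1+i\omega_2$ (resp. $\omega_-$), which is $\tfrac12(\omega_{K}+i\omega_I)$ for the appropriate limiting triple, so the same lemma applies verbatim.

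For the reduced statement I would argue as follows. Set $\varpi = \tfrac12(\omega_K + i\omega_I)$. Using the compatibility relations $\omega_K = gK$, $\omega_I = gI$ (viewing forms as maps $T\M\to T^*\M$), we have $\varpi = \tfrac12 g(K + iI)$. Now $K+iI = K(1 - iJ)$ because $KI = J$, err — one must be careful: from \eqref{eq:quat}, $KI = J$ and $IK = -J$, and $K\cdot(-iKI) \cdot$ — let me instead write $K + iI = (1 + iKI\cdot K^{-1})$ applied on the left, i.e. $K + iI = K + i(K^{-1})^{-1}\cdots$; cleaner: since $K^2 = -1$, we have $K + iI = K(1 - K(iI)) = K(1 + iKI)\cdot(-1)$ — the point I want to land on is simply $K + iI = K(1 - iJ)$ using $-KI\cdot(-1)$; regardless of the precise bookkeeping of signs, the upshot is that $K+iI$ factors as an invertible map ($\pm K$) composed with $(1\pm iJ)$, the projector (up to scalar $2$) onto a $\pm i$-eigenspace of $J$. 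Hence $\varpi$ annihilates one of $T^{(1,0)}\M$, $T^{(0,1)}\M$ and restricts to an isomorphism from the other onto the correspondingly-graded piece of $T^*_\CC\M$. Since $\varpi$ is a $(2,0)$-form (already established in Lemma \ref{lem:L}), it must be the holomorphic part that survives, giving the desired isomorphism $T^{(1,0)}\M^{(\zeta)} \to T^*_{(1,0)}\M^{(\zeta)}$.

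I expect the main obstacle to be purely a matter of sign and normalization bookkeeping: getting the quaternionic identities to produce the correct eigenspace projector $(1 \mp iJ)$ with the orientation conventions of \eqref{eq:stereo} and \eqref{eq:quat}, and confirming that it is indeed $T^{(1,0)}$ (not $T^{(0,1)}$) on which $\varpi(\zeta)$ is non-degenerate — consistent with the $(2,0)$-type already proved. There is also a small point to address at $\zeta = 0,\infty$: one should check that the limiting triples $K^{(0)}, I^{(0)}$ from Lemma \ref{prop:rotate} on the real blowup genuinely satisfy the quaternion relations with $J^{(0)} = J_3$ (they do, by the continuity/$SO(3)$-equivariance built into that lemma and the $SO(3)$-invariance of \eqref{eq:quat} from Lemma 2.1), so that the same one-point computation applies without modification. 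Apart from that, the argument is a two-line consequence of the previous lemma together with standard linear algebra on a quaternionic vector space, and I would present it as such.
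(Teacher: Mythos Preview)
Your approach is correct and is essentially the paper's second proof: once Lemma~\ref{prop:rotate} rewrites $\varpi(\zeta)$ (up to a nonzero scalar) as $\tfrac12(\omega_{K^{(\zeta)}}+i\omega_{I^{(\zeta)}})$, your factorisation $K+iI = K(1-iJ)$ is just a repackaging of the identity $\varpi(v,K\bar x)=g(v,\bar x)$ derived there as \eqref{eq:holoG}, reducing non-degeneracy of $\varpi$ to that of $g$. (For the record, $K(1-iJ)=K-iKJ=K+iI$ since $KJ=-I$, so your sign anxiety is unfounded; and $(1-iJ)$ is $2$ on $T^{(1,0)}$ and $0$ on $T^{(0,1)}$, confirming the eigenspace you want.)

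The only substantive thing the paper adds is an alternative, even shorter first argument that avoids $g$ entirely: if $v\in T^{(1,0)}\M^{(\zeta)}$ satisfies $\varpi(\zeta)(v,\cdot)=0$, then also $\varpi(\zeta)(v+\bar v,\cdot)=0$ (since $\varpi$ is $(2,0)$), whence $\omega_{K^{(\zeta)}}(v+\bar v,\cdot)=0$ by taking real parts, and the real symplectic form $\omega_{K^{(\zeta)}}$ alone forces $v+\bar v=0$, so $v=0$. This version uses only one of the two K\"ahler forms and works without ever invoking the metric, which is a minor economy you might prefer when writing it up.
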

\begin{proof}[Proof of Corollary \ref{cor:holsymp}] Assuming Lemma \ref{prop:rotate} for the moment, we can give two proofs for every $\zeta\in \widetilde{\PP^1}$ that $\varpi(\zeta)$ in \eqref{eq:rescaledVarpi} is non-degenerate.
We show for every non-zero $v\in (T^{(1,0)}\M^{(\zeta)})_m$ there is a $w\in (T^{(1,0)}\M^{(\zeta)})_m$ such that $\varpi(\zeta)(v,w)\not=0$. 

First,
\begin{align} (|\zeta|+|\zeta|^{-1})^{-1}\varpi(\zeta)(v,\cdot) = 0 &\Rightarrow (|\zeta|+|\zeta|^{-1})^{-1}\varpi(\zeta)(v+\bar v,\cdot) = 0 \nonumber\\
&\Rightarrow \omega_{K^{(\zeta)}}(v+\bar v,\cdot) = 0 \Rightarrow v+\bar v = 0 \Rightarrow v=0 \ .
\end{align}

Second, if $\bar x\in (T^{(0,1)}\M^{(\zeta)})_m$ then $I^{(\zeta)} \bar x=i I^{(\zeta)} J^{(\zeta)} \bar x = i K^{(\zeta)} \bar x$, and so $g(v,\bar x) = \omega_{K^{(\zeta)}}(v, K^{(\zeta)}\bar x) = \omega_{I^{(\zeta)}}(v, I^{(\zeta)} \bar x) = i \omega_{I^{(\zeta)}}(v, K^{(\zeta)} \bar x)$ and
\be (|\zeta|+|\zeta|^{-1})^{-1} \varpi(\zeta)(v, K^{(\zeta)} \bar x) = g(v,\bar x) \ . \label{eq:holoG} \ee
This not only reduces the non-degeneracy of \eqref{eq:rescaledVarpi} to that of $g$, but also shows that if $\M$ is Riemannian---as opposed to pseudo-Riemannian---then we may take $w=K^{(\zeta)} \bar v$. 
\end{proof}
\begin{proof}[Proof of Lemma \ref{prop:rotate}]
We consider choices of complex structures $K^{(\zeta)}$ and $I^{(\zeta)}$ such that $\begin{pmatrix} K^{(\zeta)} \\ I^{(\zeta)} \\ J^{(\zeta)} \end{pmatrix} = R \begin{pmatrix} J_1 \\ J_2 \\ J_3 \end{pmatrix}$ for some $R\in SO(3)$. Changing bases, we have
\be c_1 J_1 + c_2 J_2 + c_3 J_3 = c'_1 K^{(\zeta)} + c'_2 I^{(\zeta)} + c'_3 J^{(\zeta)} \ , \ee
so that $c'=Rc$. This makes it clear that the allowed $R$ are those which map $\zeta$ to 0. In the parametrization of \eqref{eq:psu2} and \eqref{eq:flPSU2}, we thus have $v=-u\zeta$. The condition $|u|^2+|v|^2=1$ then allows us to write $u=\frac{e^{i\theta}}{\sqrt{1+|\zeta|^2}}$ and $v=-\frac{\zeta e^{i\theta}}{\sqrt{1+|\zeta|^2}}$ for some $\theta\in\RR$; this phase freedom is associated to the freedom of rotating $K^{(\zeta)}$ and $I^{(\zeta)}$ into each other while fixing $J^{(\zeta)}$. Since the vector of pseudo-K\"ahler forms also transforms via $R$ (so that the dot product $c\cdot \omega$ is invariant), we have $\begin{pmatrix} \omega_{K^{(\zeta)}} \\ \omega_{I^{(\zeta)}} \\ \omega_{J^{(\zeta)}} \end{pmatrix} = R \begin{pmatrix} \omega_1\\ \omega_2\\ \omega_3 \end{pmatrix}$. The top right entry of \eqref{eq:psu2} then demonstrates that
\be \omega_{K^{(\zeta)}}+i\omega_{I^{(\zeta)}} = \frac{2i\zeta}{1+|\zeta|^2} e^{2i\theta} \varpi(\zeta) \ . \ee
So, we have \eqref{eq:rescaledVarpi} for a unique choice of $\theta\in \RR/\pi\ZZ$ --- i.e., for a unique $R$.
\end{proof}

\begin{proposition}[\cite{hitchin:hkSUSY}] \label{prop:twistorprop}
Suppose $\M$ is a pseudo-hyper-K\"ahler manifold with twistor space $\Z$. Then, 
\begin{enumerate}[(1)]
    \item $\Z$ is a complex manifold, with holomorphic projection $p: \Z \to \PP^1$;
    \item $\Z$ carries a twisted fiberwise holomorphic symplectic form $\varpi(\zeta)$, which is a holomorphic section of $\wedge^2 T^*_{(1,0)} \M \otimes p^*\mathcal{O}(2) \to \Z$;
    \item $\Z$ carries a real structure $\rho: \Z \to \Z$, i.e. an antiholomorphic involution, such that
    \begin{enumerate}[(a)]\item 
    $\rho$ covers the antipodal involution \begin{align*}
        \sigma: \PP^1 & \to \PP^1\\
        \zeta &\mapsto -1/ \overline{\zeta}
    \end{align*}
    \item $\rho^* \varpi = \overline{\varpi}$
\end{enumerate}
\end{enumerate}
\end{proposition}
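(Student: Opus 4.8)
The plan is to verify the three items of Proposition~\ref{prop:twistorprop} in turn, two of which have essentially already been established. Item (1) is exactly Proposition~\ref{prop:twistoriscomplex}, and the existence of the holomorphic projection $p:\Z\to\PP^1$ is built into Definition~\ref{def:twistorspace}. Item (2) is the content of Lemma~\ref{lem:L} together with Corollary~\ref{cor:holsymp}: Lemma~\ref{lem:L} shows that $\varpi(\zeta)$ spans the line bundle $L\cong\Oo(-2)$ fiberwise and is a holomorphic section of $\wedge^2 T^*_{(1,0)}\M\otimes p^*\Oo(2)\to\Z$, and Corollary~\ref{cor:holsymp} shows it is fiberwise non-degenerate, hence a genuine twisted fiberwise holomorphic symplectic form. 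So the only real work is item (3): constructing the real structure $\rho$ and checking its properties.

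For item (3), I would define $\rho$ on the underlying real manifold $\Z=\M\times\PP^1$ by $\rho(m,\zeta)=(m,\sigma(\zeta))=(m,-1/\bar\zeta)$, i.e.\ the identity on $\M$ composed with the antipodal map on $\PP^1$. Property (3a), that $\rho$ covers $\sigma$, is then immediate by construction. The key point is to check that $\rho$ is antiholomorphic. Recall from the remark following the Definition that the antipodal map on $\PP^1$ sends $c^{(\zeta)}\mapsto -c^{(\zeta)}$, hence $J^{(\zeta)}\mapsto -J^{(\zeta)} = J^{(\sigma(\zeta))}$; so at the fiber over $\zeta$, the map $\rho$ restricted to $\M^{(\zeta)}\to\M^{(\sigma(\zeta))}$ is the identity map of $\M$ viewed as a map from $(\M,J^{(\zeta)})$ to $(\M,-J^{(\zeta)})$, which is manifestly antiholomorphic; combined with the antiholomorphicity of $\sigma$ on the base and the fact that the complex structure on $\Z$ is the one of Definition~\ref{def:twistorspace} built fiberwise from the $J^{(\zeta)}$ together with the base complex structure (as used in the proof of Proposition~\ref{prop:twistoriscomplex}), this shows $\rho^*$ intertwines the almost complex structure on $\Z$ with minus itself, i.e.\ $\rho$ is antiholomorphic. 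Since $\sigma^2=\mathrm{id}$ and $\rho$ is the identity on the $\M$ factor, $\rho^2=\mathrm{id}$, so $\rho$ is an involution.

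It remains to verify property (3b), $\rho^*\varpi=\overline{\varpi}$. Since $\rho$ is the identity on the $\M$ factor and sends the fiber over $\zeta$ to the fiber over $\sigma(\zeta)$, one has $(\rho^*\varpi)(\zeta) = \varpi(\sigma(\zeta)) = \varpi(-1/\bar\zeta)$ as a $2$-form on $\M$. Using the explicit formula $\varpi(\zeta) = -\frac{i}{2\zeta}\omega_+ + \omega_3 - \frac{i}{2}\zeta\,\omega_-$ from Lemma~\ref{lem:L}, together with $\omega_- = \overline{\omega_+}$ and $\overline{\omega_3}=\omega_3$, I would substitute $\zeta\mapsto -1/\bar\zeta$ and compare with the complex conjugate $\overline{\varpi(\zeta)} = \frac{i}{2\bar\zeta}\omega_- + \omega_3 + \frac{i}{2}\bar\zeta\,\omega_+$; a direct term-by-term match (the $\omega_+$ coefficient $-\frac{i}{2}\cdot(-1/\bar\zeta) = \frac{i}{2\bar\zeta}$ matching, etc.) confirms the identity, with due care for the weight-$2$ twisting under $p^*\Oo(2)$ so that the comparison is of sections and not merely of pointwise $2$-form values. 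The main obstacle, such as it is, is purely bookkeeping: being careful about the identification of the twist $p^*\Oo(2)$ under $\sigma^*$ (since $\sigma^*\Oo(2)\cong\overline{\Oo(2)}$) so that the statement $\rho^*\varpi=\overline\varpi$ is interpreted on the correct bundle; the underlying computation on $2$-forms on $\M$ is routine given the explicit formula.
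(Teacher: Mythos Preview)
Your proposal is correct and follows essentially the same route as the paper: items (1) and (2) are reduced to Proposition~\ref{prop:twistoriscomplex}, Lemma~\ref{lem:L}, and Corollary~\ref{cor:holsymp}, and for (3) you define $\rho(m,\zeta)=(m,-1/\bar\zeta)$ and verify $\rho^*\varpi=\overline{\varpi}$ by direct substitution into the explicit formula for $\varpi(\zeta)$. You are in fact slightly more thorough than the paper's own remark, since you spell out why $\rho$ is antiholomorphic and flag the $\Oo(2)$-twist bookkeeping; note that the paper records the substitution as yielding $-\overline{\varpi(\zeta)}$, the extra sign being exactly the effect of the real structure on $p^*\Oo(2)$ that you allude to.
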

\begin{proof}[Remark on Proof]
Proposition \ref{prop:twistoriscomplex} gives (1), while Lemma \ref{lem:L} and Corollary \ref{cor:holsymp} gives (2). For (3), using the identification of $\Z$ with $\M \times \PP^1$ as real manifolds, take $\rho(m, \zeta)=(m, - 1/\overline{\zeta}).$ The statement $\rho^*\varpi = \overline{\varpi}$ amounts to the fact that if we replace $\zeta$ with $-1/\overline{\zeta}$ in $\varpi(\zeta)$, then we get $-\overline{\varpi(\zeta)}$, for $\varpi(\zeta)$ in Lemma \ref{lem:L}.
\end{proof}

\begin{defn}[Real section of $\Z \to \PP^1$]A smooth section $s: \PP^1 \to \Z$ of $p: \Z \to \PP^1$ is called \emph{real} 
if $\rho(s(\zeta))=s(\sigma(\zeta))$ for all $\zeta \in \PP^1$.
\end{defn}

\begin{proposition}[{\cite[Theorem 3.3ii]{hitchin:hkSUSY}}] \label{prop:normal}Let $\M^{4r}$ be a pseudo-hyper-K\"ahler manifold, and let $\Z$ be its twistor space . For each $m \in \M$, there is a  corresponding real holomorphic section of $p: \Z \to \PP^1$ given by 
\begin{align*}
    s_m: \PP^1 &\to \Z \simeq \M \times \mathbb{P}^1\\
    \zeta &\mapsto   \qquad (m, \zeta)
\end{align*}
corresponding to the points $m \in \M$. 
The normal bundle to $s_m(\mathbb{P}^1) \subset \Z$ is isomorphic to $\mathcal{O}(1)^{\oplus 2r}$.
\end{proposition}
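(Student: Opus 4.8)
The plan is to compute the normal bundle $N_{s_m}$ directly from the structure results already established. For a section $s_m$ of $p:\Z\to\PP^1$, the normal bundle at the point $(m,\zeta)$ is naturally identified with the fiber of the vertical tangent bundle of $\Z$ along $s_m$, i.e. with $\ker(p_*)|_{s_m(\PP^1)}$. We have already noted in the text that $\ker(p_*:T^{(1,0)}\Z\to T^{(1,0)}\PP^1) = T^{(1,0)}\M$ as a holomorphic bundle over $\Z$, so restricting to $s_m(\PP^1)\cong\PP^1$ gives $N_{s_m}\cong (T^{(1,0)}\M)_m$ as a holomorphic vector bundle over $\PP^1$. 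By Corollary \ref{cor:duality}(a), $(T^{(1,0)}\M)_m\cong (T^{(1,0)}\M^{(0)})_m\otimes\Oo(1)$, which is $\Oo(1)^{\oplus 2r}$ since $\dim_\CC \M^{(0)}=2r$ and $(T^{(1,0)}\M^{(0)})_m$ is the trivial rank-$2r$ bundle over $\PP^1$.

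The one point requiring care is that the identification of $N_{s_m}$ with the \emph{holomorphic} bundle $(T^{(1,0)}\M)_m$ is genuine, i.e. that the short exact sequence $0\to T^{(1,0)}\M \to T^{(1,0)}\Z|_{s_m} \to s_m^*T^{(1,0)}\PP^1\to 0$ is a sequence of holomorphic bundles over $\PP^1$ and that the quotient realizes the normal bundle. Since $p$ is holomorphic (Proposition \ref{prop:twistorprop}(1)) and $s_m$ is a holomorphic section (it is the image of $m$ under the identification of $\Z$ with $\M\times\PP^1$, and $J^{(\zeta)}$-holomorphicity in the $\M$ directions together with holomorphicity in $\zeta$ make $s_m(\PP^1)$ a complex submanifold), the normal bundle is holomorphic and fits into exactly this sequence; the kernel term is $(T^{(1,0)}\M)_m$ with the holomorphic structure of Definition \ref{def:holrealdef1}, which is the one appearing in Corollary \ref{cor:duality}. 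I would also remark that $\deg N_{s_m}=2r$ and $h^0(N_{s_m})=4r = \dim_\RR\M$, consistent with $s_m(\PP^1)$ moving in a $4r$-real-dimensional family, as a sanity check.

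The main obstacle — really the only nontrivial content — is pinning down the holomorphic structure: one must make sure that the abstract complex vector bundle $(T^{(1,0)}\M)_m$ over $\PP^1$, when given the holomorphic structure it inherits as $\ker p_*\subset T^{(1,0)}\Z$ (equivalently as the vertical tangent bundle of the fibration), agrees with the holomorphic structure used in Corollary \ref{cor:holbundle} and Corollary \ref{cor:duality}, rather than, say, the anti-holomorphic structure discussed in footnote \ref{foot:holvsantihol}. The cleanest way to see this is to observe that both structures are characterized by the same $\overline\partial$-operator: a section of $(T^{(1,0)}\M)_m$ is holomorphic for the fibration structure iff it extends to a local vector field on $\Z$ whose bracket with $\overline\partial_\zeta$-type vectors is again of type $(1,0)$, and this is exactly the condition dual to the one in Definition \ref{def:holrealdef1} via the pairing with $(T^*_{(1,0)}\M)_m$. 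Once that identification is in hand, the degree count is immediate and the proposition follows.
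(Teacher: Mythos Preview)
Your proposal is correct and follows essentially the same approach as the paper: the paper's ``Remark on Proof'' simply points to the key observation $(T^{(1,0)}\M)_m \simeq (T^{(1,0)}\M^{(0)})_m \otimes \mathcal{O}(1)$ from Corollary~\ref{cor:duality}(a), and you have fleshed this out by explaining why the normal bundle of $s_m$ is $(T^{(1,0)}\M)_m$ and flagging the holomorphic-structure matching as the only subtle point. One small wording quibble: in your short exact sequence $0\to T^{(1,0)}\M \to T^{(1,0)}\Z|_{s_m} \to s_m^*T^{(1,0)}\PP^1\to 0$, it is the \emph{sub}bundle $T^{(1,0)}\M$ (not the quotient) that is identified with the normal bundle, via the splitting coming from $s_m$ being a section of $p$.
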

\begin{proof}[Remark on Proof]
The key observation here is that $(T^{(1,0)} \M)_m \simeq (T^{(1,0)} \M^{(0)})_m  \otimes \mathcal{O}(1)$.
\end{proof}

\bigskip
Lastly, we note that the tangent space to $s_m$ within the space of real holomorphic sections of $(T^{(1,0)} \M)_m,$ which we denote
$H^0_\RR((T^{(1,0)}\M)_m),$ can naturally be identified with the tangent space $T_m\M$ via an intermediate identification with $(T^{(1,0)} \M^{(0)})_m$ in Lemma \ref{lem:realHol}. After Theorem \ref{thm:HKLR}, we will discuss how this is the key observation to show that when Hitchin--Karlhede--Lindstr\"om--Ro\v{c}ek's twistor theorem is applied to a twistor space $\mathcal{Z}$ obtained from a pseudo-hyper-K\"ahler manifold $\M$, it recovers the original pseudo-hyper-K\"ahler structure.  

\begin{definition}\label{def:holrealdef}
A smooth section $s$ of $(T^{(1,0)} \M)_m \subset (T_{\CC} \M)_m \subset T_\CC \Z$ is \emph{real} if under 
the induced conjugate linear differential map\footnote{Concretely, the trivialization as real manifolds $\mathcal{Z} \simeq \M \times \PP^1$ induces a trivialization of $T\M \to \PP^1$ and of $T_\CC \M \to \PP^1$. Given $(v, \lambda) \in (T\M)_m \otimes \CC \subset (T_\CC\Z)_{(m, \zeta)} $, the image under $\de \rho$ is $(v, \overline{\lambda}) \in (T \M)_m \otimes \CC \subset (T_\CC \Z)_{(m, -1/\overline{\zeta})}$.}
$\rho_*: T_\CC \Z \to T_\CC \Z$ covering the antipodal involution $\sigma$, one has
$$\rho_* \circ s = s \circ \sigma.$$
\noindent\underline{Notation:} We will use the complex conjugate notation $\overline{\cdot}$ to denote the real structure $ \rho_*$ on $(T_\CC \M)_m$, i.e. $\overline{s(\zeta)} = \rho_*(s(\zeta))$.
\end{definition}

We stress that the reality condition is defined by regarding $(T^{(1,0)}\M)_m$ as a (complex or antiholomorphic) subbundle of  the trivial complex bundle $(T_\CC\M)_m$ whereas the holomorphicity condition is defined in Definition \ref{def:holrealdef} via duality with $(T^*_{(1,0)}\M)_m$  
(see extended discussion in Footnote \ref{foot:holvsantihol}), and the translation between these two languages is not immediate.

\begin{lemma} \label{lem:realHol}\footnote{Comparing Lemma \ref{lem:realHol} to \cite[(3.102)]{hitchin:hkSUSY}, we take the opportunity to identify $j=J_1 \overline{\cdot}$.}
Suppose $\Z$ is the twistor space of a pseudo-hyper-K\"ahler manifold $\M$.
Represent a holomorphic section of $(T^{(1,0)} \M)_m \to \PP^1$ under the holomorphic identification $(T^{(1,0)} \M)_m \simeq (T^{(1,0)} \M^{(0)})_m  \otimes \mathcal{O}(1)$ as a section $\zeta \mapsto v(\zeta)$ using the standard trivialization\footnote{The line bundle $\mathcal{O}(k) \to \PP^1$ can be built by gluing the trivial bundle $\CC_\zeta \times \CC \to \CC_\zeta = \PP^1-\{0\}$ with the trivial bundle $\CC_{\widetilde{\zeta}} \times \CC \to \CC_{\widetilde{\zeta}} =\PP^1-\{\infty\}$ via the map
$$(\zeta, f(\zeta)) \mapsto (\widetilde{\zeta}=\zeta^{-1}, \widetilde{\zeta}^k f(\widetilde{\zeta}^{-1})).$$
Let $e(\zeta)=(\zeta,1)$ be the canonical section of the trivial bundle of $\CC_\zeta \times \CC \to \CC_\zeta$. Thus, holomorphic section of $\mathcal{O}(k)$ in the trivialization by $e$ are given by degree $k$ polynomials $f$; holomorphic sections of $\mathcal{O}(1)$ are given by linear polynomials $f$.} 
$e$ of $\mathcal{O}(1)|_\CC$.
The map
\begin{align*}
    (T^{(1,0)} \M^{(0)})_m &\to H^0_{\RR}((T^{(1,0)}\M)_m)\\
    a &\mapsto v(\zeta) =a - \zeta J_1 \overline{a}
\end{align*}
is an isomorphism.
\end{lemma}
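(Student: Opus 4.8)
The plan is to verify three things in turn: (i) that for each $a \in (T^{(1,0)}\M^{(0)})_m$ the section $v(\zeta) = a - \zeta J_1\overline{a}$ is genuinely a holomorphic section of $(T^{(1,0)}\M)_m$ in the sense of Definition \ref{def:holrealdef1}; (ii) that it is real in the sense of Definition \ref{def:holrealdef}; and (iii) that the resulting linear map $a \mapsto v(\zeta)$ is a bijection onto $H^0_\RR((T^{(1,0)}\M)_m)$.

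\emph{Holomorphicity.} First I would check that $v(\zeta)$ actually lands in $(T^{(1,0)}\M^{(\zeta)})_m$ for each $\zeta$. By Corollary \ref{cor:duality}(a), a section is a holomorphic section of $(T^{(1,0)}\M)_m \simeq (T^{(1,0)}\M^{(0)})_m \otimes \Oo(1)$ precisely when, in the standard trivialization $e$ of $\Oo(1)$, it is given by a degree-$\le 1$ polynomial with values in the fixed vector space $(T^{(1,0)}\M^{(0)})_m$. But there is a subtlety: $v(\zeta)$ as written is a $\CC$-linear combination of $a \in T^{(1,0)}\M^{(0)}$ and $J_1\overline{a} \in T^{(1,0)}\M^{(0)}$ (note $\overline{a} \in T^{(0,1)}\M^{(0)}$, so $J_1\overline{a} \in T^{(1,0)}\M^{(0)}$ since $J_1$ interchanges the two), so it genuinely is a linear polynomial valued in the trivial bundle $(T^{(1,0)}\M^{(0)})_m$, hence holomorphic. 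I would also double check consistency via Proposition \ref{prop:iso0tozeta}(a): applying the isomorphism $1 + \zeta J_1 : T^{(0,1)}\M^{(0)} \to T^{(0,1)}\M^{(\zeta)}$ to $-\overline a$ gives $-\overline a - \zeta J_1 \overline a$, whose conjugate is $a - \zeta J_1\overline a = v(\zeta)$ (using $\overline{\zeta J_1 \overline a} = \zeta J_1 a$ only when $\zeta$ is real — so more carefully, one conjugates coefficients and vectors together), confirming $v(\zeta) \in T^{(1,0)}\M^{(\zeta)}$. Either route shows (i). Alternatively, pair against an arbitrary holomorphic section $\sigma(\zeta)$ of $(T^*_{(1,0)}\M)_m$, which by Proposition \ref{prop:iso0tozeta}(b) and Corollary \ref{cor:holbundle}(b) has the form $(1-\zeta J_1')\tau$ with $\tau \in T^*_{(1,0)}\M^{(0)}$ divided by the $\Oo(-1)$ trivialization, and check $\sigma(\zeta)(v(\zeta))$ is constant in $\zeta$; this is a short bilinear computation.

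\emph{Reality.} Using the notation of Definition \ref{def:holrealdef}, where $\overline{\cdot}$ is the real structure $\rho_*$ on $(T_\CC\M)_m$ (i.e. ordinary complex conjugation on $T_\CC\M$ in the real trivialization), I must check $\rho_*(v(\zeta)) = v(\sigma(\zeta)) = v(-1/\overline\zeta)$. Now $\rho_*$ covers $\zeta \mapsto -1/\overline\zeta$ and acts on the fiber by $(w,\lambda) \mapsto (w, \overline\lambda)$, so $\rho_*(v(\zeta)) = \overline{a - \zeta J_1\overline a} = \overline a - \overline\zeta J_1 a$, where now I use that $J_1$ is a real operator so commutes with conjugation, and $\overline{\overline a} = a$. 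On the other hand, $v(-1/\overline\zeta) = a - (-1/\overline\zeta)J_1\overline a = a + \overline\zeta^{-1}J_1\overline a$. These look different, but the point is that they represent the \emph{same} section of the $\Oo(1)$ bundle written in the two different trivializations $e$ (on $\PP^1 \setminus \{0\}$) and $\widetilde e$ (on $\PP^1 \setminus \{\infty\}$), related by the gluing $\widetilde\zeta = \zeta^{-1}$, $\widetilde e = \zeta e$; accounting for this transition factor and for $\rho_*$ acting also on the $\Oo(1)$-part, the two expressions match. I would carry this out carefully — it is exactly the place where "the translation between these two languages is not immediate," as the text warns — by writing $v$ as an honest section of $(T^{(1,0)}\M^{(0)})_m \otimes \Oo(1)$, i.e.\ $v = a \otimes s_1 - J_1\overline a \otimes s_2$ for a suitable basis $s_1, s_2$ of $H^0(\Oo(1))$, and tracking how $\rho_*$ permutes $s_1, s_2$ and conjugates $a$.

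\emph{Bijectivity.} The map $a \mapsto v(\cdot)$ is manifestly $\RR$-linear and injective (if $v \equiv 0$ then setting $\zeta = 0$ gives $a = 0$). For surjectivity I would count dimensions: $(T^{(1,0)}\M^{(0)})_m$ has real dimension $4r$; meanwhile $H^0((T^{(1,0)}\M)_m) = H^0(\Oo(1)^{\oplus 2r}) \otimes \CC$ has complex dimension $2 \cdot 2r = 4r$ over $\CC$, and the real structure cuts this to a real form of real dimension $4r$ — provided the real structure has no fixed points obstructing a real form of the expected size, which holds because $\sigma$ is fixed-point-free on $\PP^1$ so $\rho_*$ is a "quaternionic-type" structure on each $\Oo(1)$ summand whose real sections form a real vector space of dimension $2 \cdot (2r)$... more precisely, the space of real sections of $\Oo(1)^{\oplus 2r}$ (with the antipodal real structure) is a real vector space of dimension $4r$. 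Since both sides have real dimension $4r$ and the map is injective, it is an isomorphism.

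\emph{Main obstacle.} The genuinely delicate step is the reality check (ii): the formula $v(\zeta) = a - \zeta J_1\overline a$ mixes the "subbundle of the trivial $(T_\CC\M)_m$" picture (in which $\rho_*$ is literal conjugation) with the "$\Oo(1)$ trivialized by $e$" picture (in which holomorphicity is transparent but $\rho_*$ also twists the line bundle), and getting the transition functions and the induced map on $\Oo(1)$ exactly right — including the sign/phase conventions of \eqref{eq:stereo} and the footnoted gluing $(\zeta, f) \mapsto (\zeta^{-1}, \zeta^k f)$ — is where all the bookkeeping lives. Everything else is linear algebra with $J_1$ and short polynomial-degree arguments.
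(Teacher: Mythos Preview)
Your holomorphicity check and your dimension count for bijectivity are fine (and the latter is a reasonable alternative to the paper's argument, which instead shows directly that \emph{every} real holomorphic section has the stated form). The genuine gap is exactly where you flagged it: the reality step. You apply $\rho_*$ directly to $v(\zeta)$ as though $v(\zeta)$ were the section $s$ of $(T^{(1,0)}\M)_m \subset (T_\CC\M)_m$ to which Definition~\ref{def:holrealdef} refers. It is not. The object $v(\zeta)$ lives in the trivial bundle $(T^{(1,0)}\M^{(0)})_m$ via the holomorphic identification $(T^{(1,0)}\M)_m \simeq (T^{(1,0)}\M^{(0)})_m \otimes \Oo(1)$, whereas $s(\zeta)$ is the honest element of $(T^{(1,0)}\M^{(\zeta)})_m \subset (T_\CC\M)_m$; the two are related by a \emph{non-holomorphic} factor that cannot be read off from $\Oo(1)$ transition functions alone. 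The paper derives this relation by pairing against holomorphic sections of $(T^*_{(1,0)}\M)_m$: writing $s = (1+\bar\zeta J_1)t$ (anti-holomorphic subbundle picture) and $\sigma = (1-\zeta J_1')\tau$, the identity $\sigma(s)=\tau(v)$ forces $v = (1+|\zeta|^2)t$, hence $s(\zeta) = \frac{1}{1+|\zeta|^2}(1+\bar\zeta J_1)v(\zeta)$.

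Once you have that formula, the reality condition $s(\zeta) = \overline{s(-1/\bar\zeta)}$ translates to $v(\zeta) = -\zeta\, J_1\,\overline{v(-1/\bar\zeta)}$. Note the $J_1$: the induced real structure on the $(T^{(1,0)}\M^{(0)})_m$ factor is $a \mapsto J_1\bar a$, \emph{not} $a\mapsto \bar a$. Your attempt to fix the mismatch by ``accounting for the transition factor and for $\rho_*$ acting on the $\Oo(1)$-part'' can at best produce the scalar $-\zeta$, never the operator $J_1$; that operator enters only through the $s\leftrightarrow v$ dictionary above. This is precisely the content of the footnote identifying $j = J_1\overline{\cdot}$, and it is the one step in the lemma that is not just bookkeeping.
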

\begin{proof}[Proof of \ref{lem:realHol}] We will characterize sections $s$ of the trivial bundle $T_\CC \M \to \PP^1$ such that (1) $s$ lies in the subbundle $T^{(1,0)} \M \to \PP^1$, (2) $s$ is real, and (3) $s$ is holomorphic. The first condition is easiest to state in terms of a related section $v$ which we will introduce, the second condition is easiest to state in terms of $s$ itself, and the third is easiest to state in terms of a a related section $t$ that we will introduce. All of these sections appear in Figure \ref{fig:twistortriv}.
\begin{figure}[h!]
\begin{centering} 
\includegraphics[height=2.8in]{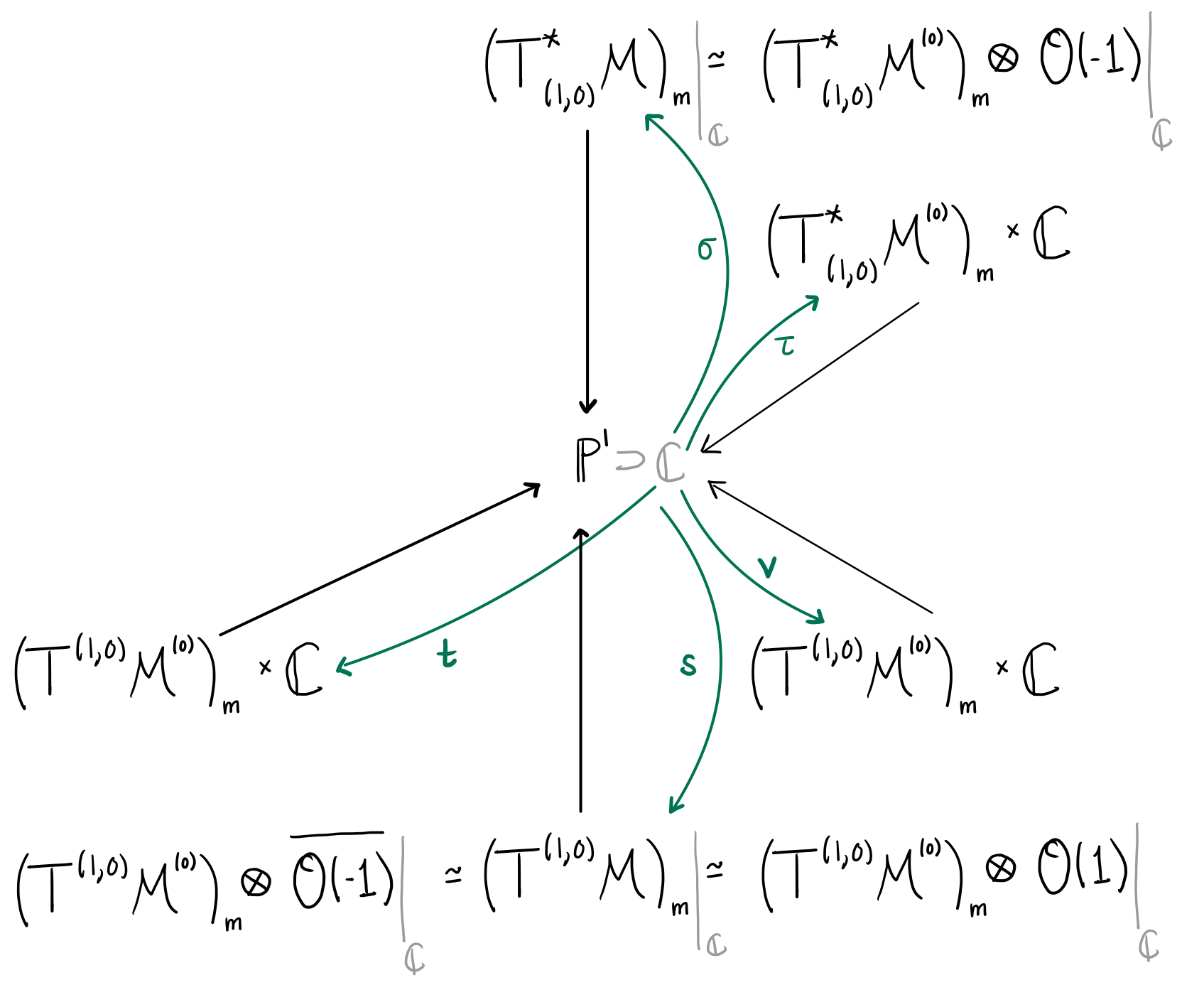} 
\caption{\label{fig:twistortriv} Relevant sections in the Proof of Lemma \ref{lem:realHol}.}
\end{centering}
\end{figure}

The trivial bundle $T_\CC \M \to \PP^1$ contains the subbundle $T^{(1,0)} \M \to \PP^1$ as a subbundle. 
As a subbundle of $(T_\CC \M)_m$, $(T^{(1,0)} \M)_m$ is naturally an anti-holomorphic $\mathcal{O}(-1)$ bundle, which we may denote $(T^{(1,0)} \M)_m \simeq T^{(0,1)} \M^{(\infty)} \otimes \overline{\mathcal{O}(-1)}$, via the identification of 
$T^{(1,0)} \M^{(\zeta)} = (1+ \overline{\zeta} J_1) T^{(1,0)} \M^{(0)}$, dual to the identification in Proposition \ref{prop:iso0tozeta}; consequently, a smooth section $s$ of $(T^{(1,0)} \M)_m \subset (T_\CC \M)_m$ over $\CC$ is related to a smooth section $t$ of the trivial bundle $T^{(1,0)} \M^{(0)} \times \CC \to \CC$ by 
\[s=(1+ \overline{\zeta} J_1)t.\]

In terms of $s$, a real section is one that satisfies 
$$s(\zeta) = \overline{s(-1/\zeta)}.$$

\bigskip

At the same time, $(T^{(1,0)} \M)_m \simeq (T^{(0,1)} \M^{(0)})_m \otimes \mathcal{O}(1)$ as the quotient bundle  $(T^{(1,0)} \M)_m = (T_\CC \M)_m/(T^{(0,1)}\M)_m $; here, as a holomorphic bundle, $(T_\CC \M)_m \simeq (T^{(1,0)} \M^{(0)})_m \otimes \mathcal{O}^{\oplus 2}$, meanwhile $(T^{(0,1)} \M)_m\simeq (T^{(0,1)} \M^{(0)})_m \otimes \mathcal{O}(-1)$. Equivalently, the holomorphic structure is given by duality as in Definition \ref{def:holrealdef}. 
Under the identification $(T^{(1,0)} \M)_m \simeq (T^{(0,1)} \M^{(0)})_m \otimes \mathcal{O}(1)$, and in the trivialization $e$ of $\mathcal{O}(1)|_{\CC}$, a holomorphic section of $(T^{(1,0)} \M)_m$ is described by a section $\zeta \to v(\zeta)$ for 
\be v(\zeta) = a +\zeta b \label{eq:holsec} \ee
for some $a, b \in (T^{(1,0)} \M^{(0)})_m$.
In order to relate $s$ to $v$, 
we use the pairing between $(T^*_{(1,0)} \M)_m$ and $(T^{(1,0)}\M)_m$. Since
$(T^*_{(1,0)}\M)_m$ is naturally an holomorphic $\mathcal{O}(-1)$ bundle via the identification  $T_{(1,0)}^* \M^{(\zeta)} = (1-\zeta J_1') T^*_{(1,0)}\M^{(0)}$ (Proposition \ref{prop:iso0tozeta}),  let $\sigma$ be a section of $(T^*_{(1,0)}\M)_m \subset (T^*_\CC \M)_m$ and let $\tau$ be a section of the trivial bundle  $(T^*_{(1,0)}\M^{(0)})_m \times \CC \to \CC$ related by $\sigma = (1-\zeta J_1')\tau$.
Consequently, $v$ is related to $s$ by
\[\sigma(s) = \tau(v).\]
\bigskip

We now seek to characterize real holomorphic sections in terms of the section $v$. We first relate $s, t$ and $v$. Plugging in (a) $\sigma=(1-\zeta J_1')\tau$ and (b) $s=(1+\overline{\zeta} J_1)t$ into $\tau(v)=\sigma(s)$, we find that 
\be \tau(v)=\sigma(s) \overset{(a)}= \tau((1 - \zeta J_1) s) \overset{(b)}= \tau(((1+|\zeta|^2) - 2i \Imag\zeta J_1)t) = \tau((1+|\zeta|^2)t) \ ; \label{eq:dual1} \ee
the last equality above used the fact that $J_1$ maps $T^{(1,0)}\M^{(0)}$ to $T^{(0,1)}\M^{(0)}$, but $\tau\in T^*_{(1,0)}\M^{(0)}$. Since 
this holds 
for all $\tau$, we have
\be v = (1+|\zeta|^2)t \ . \label{eq:ttov} \ee
Therefore, since $s=(1+\overline{\zeta} J_1)t$,
\be s(\zeta) = \frac{1}{1+|\zeta|^2} (1+\bar\zeta J_1) v(\zeta)  \label{eq:stov1}\ . \ee

Recall that, in terms of $s$, a real section is one that satisfies $s(\zeta) = \overline{s(-1/\overline{\zeta})}.$ We translate this to $v$ using \eqref{eq:stov1}. 
For $\zeta\in \CC^\times$, we now compute
\be \overline{s(-1/\bar\zeta)} = \frac{-\zeta}{1+|\zeta|^2} (1+\bar\zeta J_1) J_1 \overline{ v(-1/\bar\zeta)} . \ee
hence, in terms of $v$, a real section is one for which $v$ satisfies 
\be v(\zeta) = - \zeta J_1 \overline{v(-1/\bar\zeta)} \ . \label{eq:vReal} \ee
Using the holomorphic condition in \eqref{eq:holsec} and condition that $s$ be in $T^{(1,0)} \M$ in \eqref{eq:ttov}, real holomorphic sections of $T^{(1,0)} \M \to \PP^1$ in this language are those sections of the form
\be v(\zeta) = a - \zeta J_1 \bar a \ , \label{eq:realDual} \ee
where $a\in (T^{(1,0)}\M^{(0)})_m$.

\bigskip

In particular, real holomorphic sections are determined by their value at $\zeta=0$.
\footnote{Of course, there is nothing special about $\zeta=0$, and so one concludes that they are determined by their value at any single point. Indeed, for any $\zeta_0\in \CC$, by defining $a'=a - \zeta_0 J_1 \bar a$ (with inverse $a=\frac{1}{1+|\zeta_0|^2}(a'+\zeta_0 J_1 \overline{a'})$) we can rewrite \eqref{eq:realDual} as
\be v(\zeta) = a' - \frac{\zeta-\zeta_0}{1+|\zeta_0|^2} (-\bar\zeta_0 a' + J_1 \overline{a'}) \ . \ee

This proof that nonzero real holomorphic sections vanish nowhere, unlike that of \cite{hitchin:hkSUSY}, does not make use of a positive-definite metric, and so enables the reconstruction of hypercomplex manifolds from their twistor spaces.} 
This gives the inverse of the map in the statement of the lemma, showing that it is an isomorphism.
\end{proof}

\section{From a family of holomorphic symplectic structures to a pseudo-hyper-K\"ahler structure}

In order to specify a pseudo-hyper-K\"ahler structure on a real manifold $\M$, we reverse many of the above constructions. This culminates in Theorem \ref{thm:twist}, where we construct a pseudo-hyper-K\"ahler structure from a $\mathbb{P}^1_\zeta$-family of holomorphic symplectic forms $\varpi(\zeta)$ of a particular shape on the real manifold $\M$ (see Definition \ref{def:holoSympDef}).  In some of the preparatory material, we follow \cite{neitzke:higgsNotes}.

\begin{definition}[
] \label{def:holoSympDef}
A \emph{holomorphic symplectic form on a real manifold} $\M$ is a closed 2-form $\Omega$, which we regard as a map $\Omega: T_\CC\M\to T^*_\CC\M$, such that $T_\CC \M = \ker\Omega \oplus \ker \bar\Omega$.
\end{definition}
\noindent This definition is related to usual definition of holomorphic symplectic form on a complex manifold as follows:
\begin{proposition}[c.f. Proposition 2.56 of \protect\cite{neitzke:higgsNotes}]
Let $\Omega$ be a holomorphic symplectic form on a real manifold $\M$. Then, there is a unique complex structure $J$ on $\M$ such that $\Omega$ is a holomorphic symplectic form on the complex manifold $(\M,J)$.

Conversely, a holomorphic symplectic form on a complex manifold is also a holomorphic symplectic form on the underlying real manifold in the sense of Definition \ref{def:holoSympDef}.
\end{proposition}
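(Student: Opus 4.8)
The plan is to prove both directions of the equivalence directly from Definition \ref{def:holoSympDef}, with the forward direction being the substantive one.

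\textbf{Forward direction.} Suppose $\Omega$ is a holomorphic symplectic form on the real manifold $\M$, so $T_\CC\M = \ker\Omega \oplus \ker\bar\Omega$. First I would observe that $\ker\bar\Omega = \overline{\ker\Omega}$ (complex conjugation interchanges the two summands), so this is a decomposition of $T_\CC\M$ into a subbundle and its conjugate. By the standard correspondence between such decompositions and almost complex structures, there is a unique almost complex structure $J$ on $\M$ whose $(1,0)$-tangent bundle is $T^{(1,0)}(\M,J) := \ker\bar\Omega$ and whose $(0,1)$-tangent bundle is $\ker\Omega$. Next I would check that $J$ is integrable: since $\Omega$ is closed and, by construction, $\Omega$ annihilates $(0,1)$-vectors in both slots, $\Omega$ is a $(2,0)$-form for $J$; then for any two local sections $v,w$ of $\ker\Omega$ (i.e.\ $(0,1)$-vectors), the identity $d\Omega(\bar u, v, w) = 0$ combined with $\Omega(v,\cdot)=\Omega(w,\cdot)=0$ and $\Omega(\cdot, \bar u) $ nondegenerate on $T^{(1,0)}$ forces $\Omega([v,w], \bar u) = 0$ for all $(1,0)$-vectors $\bar u$, hence $[v,w] \in \ker\Omega$; so the $(0,1)$-distribution is involutive and $J$ is integrable by Newlander--Nirenberg. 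Finally, with respect to this $J$, $\Omega$ is a closed $(2,0)$-form, and the hypothesis $T_\CC\M = \ker\Omega \oplus \ker\bar\Omega$ says exactly that $\Omega$ restricts to a nondegenerate pairing on $T^{(1,0)}(\M,J)$, i.e.\ $\Omega$ is a holomorphic symplectic form on $(\M,J)$. (Closedness plus type $(2,0)$ gives $\bar\partial\Omega = 0$, so $\Omega$ is genuinely holomorphic.) Uniqueness of $J$ is immediate: any complex structure for which $\Omega$ is a holomorphic symplectic form must have $\Omega$ of type $(2,0)$ and nondegenerate on $T^{(1,0)}$, which forces $T^{(0,1)} \subseteq \ker\Omega$ and, by a dimension count using nondegeneracy, $T^{(0,1)} = \ker\Omega$; this pins down $J$.

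\textbf{Converse direction.} Suppose $\Omega$ is a holomorphic symplectic form on a complex manifold $(\M,J)$ in the usual sense: a closed holomorphic $(2,0)$-form that is fiberwise nondegenerate on $T^{(1,0)}\M$. Then $\Omega$ is automatically closed as a form on the underlying real manifold, and I would check the kernel condition: a $(2,0)$-form annihilates every $(0,1)$-vector, so $T^{(0,1)}\M \subseteq \ker\Omega$; nondegeneracy on $T^{(1,0)}\M$ gives the reverse containment $\ker\Omega \subseteq T^{(0,1)}\M$ after noting $\Omega(v,\cdot)$ for $v \in T^{(1,0)}\M$ pairs nontrivially with $T^{(1,0)}\M$. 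Hence $\ker\Omega = T^{(0,1)}\M$ and, conjugating, $\ker\bar\Omega = T^{(1,0)}\M$, so $T_\CC\M = T^{(1,0)}\M \oplus T^{(0,1)}\M = \ker\bar\Omega \oplus \ker\Omega$, which is Definition \ref{def:holoSympDef}.

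\textbf{Main obstacle.} The routine parts are the kernel/type bookkeeping; the one genuinely substantive step is the integrability of $J$ in the forward direction, where one must extract involutivity of the $(0,1)$-distribution from $d\Omega = 0$ together with the nondegeneracy built into Definition \ref{def:holoSympDef}. This is a short computation but is the crux of why the definition on a real manifold is equivalent to the complex-geometric one, and it is where the two pieces of the hypothesis ($\Omega$ closed, $\ker\Omega$ a "maximal" summand) both get used. I would present this step carefully and treat the rest as bookkeeping.
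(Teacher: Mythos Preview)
Your proposal is correct and follows essentially the same approach as the paper: define $J$ by declaring $T^{(0,1)}=\ker\Omega$, then use closedness of $\Omega$ (equivalently $\bar\Omega$) and the Cartan formula to show the distribution is involutive. The paper runs the integrability argument on $(1,0)$-vectors with $\bar\Omega$ and an arbitrary third vector $x$, while you run the conjugate version on $(0,1)$-vectors; your aside about ``$\Omega(\cdot,\bar u)$ nondegenerate on $T^{(1,0)}$'' is not actually needed at that step (the Cartan formula already gives $\Omega([v,w],\bar u)=0$ directly), but this is a harmless redundancy, and your explicit treatment of uniqueness and the converse is slightly fuller than the paper's.
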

\begin{proof} 
The unique almost complex structure with respect to which $\Omega$ is a $(2,0)$-form that defines an isomorphism $\Omega: T^{(1,0)}\to T^*_{(1,0)}$ is given by declaring $\ker\Omega = T^{(0,1)}\M$ and $\ker\bar\Omega = T^{(1,0)}\M$. To see that this is integrable, let $v$ and $w$ be $(1,0)$-vector fields. Then, for any vector field $x$ we have
\be 0 = d\bar\Omega(v,w,x) = - \bar\Omega([v,w],x) \ . \ee
This shows that $[v,w]$ is also of type $(1,0)$.

The converse is clear.
\end{proof}

\noindent A convenient means of proving that a 2-form $\Omega$ is holomorphic symplectic is the following:
\begin{proposition} \label{prop:holoVol}
A closed 2-form $\Omega$ on a real $4r$-manifold $\M$ is a holomorphic symplectic form if and only if $\Omega^r\wedge\bar\Omega^r$ vanishes nowhere and $\dim_\CC \ker\Omega \ge 2r$.
\end{proposition}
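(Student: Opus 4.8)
The plan is to prove both directions, exploiting the pointwise nature of the statement: at a fixed point $m\in\M$, everything reduces to linear algebra on the complex vector space $(T_\CC\M)_m$ of dimension $4r$, with the conjugation $\overline{\cdot}$ coming from the real structure and $\Omega$ a skew-symmetric complex bilinear form satisfying $\overline{\Omega(v,w)}=\bar\Omega(\bar v,\bar w)$.

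\medskip

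\noindent\textbf{The easy direction.} If $\Omega$ is holomorphic symplectic in the sense of Definition \ref{def:holoSympDef}, then $T_\CC\M=\ker\Omega\oplus\ker\bar\Omega$ with each summand of complex dimension $2r$ (they are swapped by conjugation, hence have equal dimension, and they sum to $4r$). Thus $\dim_\CC\ker\Omega=2r\ge 2r$. Restricted to the quotient $T_\CC\M/\ker\Omega$ — equivalently, to a complement, which we may take to be $\ker\bar\Omega$ — $\Omega$ is a nondegenerate skew form on a $2r$-dimensional space, so $\Omega^r\ne 0$ there; concretely $\Omega^r$ is a nonzero element of $\wedge^{2r}(\ker\bar\Omega)^*$ (pulled back), and similarly $\bar\Omega^r$ is a nonzero element built from $(\ker\Omega)^*$. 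Since $\wedge^{4r}T^*_\CC\M\cong \wedge^{2r}(\ker\bar\Omega)^*\otimes\wedge^{2r}(\ker\Omega)^*$ under the direct sum decomposition, the product $\Omega^r\wedge\bar\Omega^r$ is a nonzero top form, and this holds at every point.

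\medskip

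\noindent\textbf{The converse.} Suppose $\Omega$ is closed, $\Omega^r\wedge\bar\Omega^r$ vanishes nowhere, and $\dim_\CC\ker\Omega\ge 2r$. First, the non-vanishing of $\Omega^r\wedge\bar\Omega^r$ forces $\Omega^r\ne 0$, so the rank of $\Omega$ (as a skew form) is at least $2r$, i.e. $\dim_\CC\ker\Omega\le 2r$; combined with the hypothesis, $\dim_\CC\ker\Omega=2r$ exactly, and likewise $\dim_\CC\ker\bar\Omega=2r$ by conjugating. It remains to show $\ker\Omega\cap\ker\bar\Omega=0$, for then a dimension count gives the direct sum decomposition $T_\CC\M=\ker\Omega\oplus\ker\bar\Omega$ and we are done (closedness is already assumed, and Definition \ref{def:holoSympDef} requires nothing more). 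Suppose $v\in\ker\Omega\cap\ker\bar\Omega$. Choose a complementary subspace $W$ to $\ker\Omega$; then $\Omega^r$ is (up to pullback) a volume form on $W$, so $\Omega^r$ vanishes on any $(2r)$-tuple of vectors that includes an element of $\ker\Omega$. Writing the top form $\Omega^r\wedge\bar\Omega^r$ and feeding in $v$ in the first slot: $v$ contracted into $\Omega^r$ gives zero (as $v\in\ker\Omega$, every term of $\iota_v\Omega^r$ contains $\iota_v\Omega=0$), and $v$ contracted into $\bar\Omega^r$ gives zero (as $v\in\ker\bar\Omega$), so $\iota_v(\Omega^r\wedge\bar\Omega^r)=0$, contradicting that $\Omega^r\wedge\bar\Omega^r$ is nowhere zero (a nowhere-zero top form on a $4r$-manifold has trivial annihilator). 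Hence $\ker\Omega\cap\ker\bar\Omega=0$.

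\medskip

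\noindent The main obstacle is purely bookkeeping: making the identification $\wedge^{4r}T^*_\CC\M\cong\wedge^{2r}(\ker\bar\Omega)^*\otimes\wedge^{2r}(\ker\Omega)^*$ precise and checking that $\Omega^r$ genuinely lands in the first factor (and $\bar\Omega^r$ in the second), so that their wedge is nonzero precisely when each factor is. This is the standard fact that a skew form of rank $2r$ on a $4r$-space has $\Omega^r$ a nonzero decomposable element of $\wedge^{2r}$ of the annihilator of its kernel; once that is in hand, both directions are immediate, and the contraction argument in the converse is the cleanest way to extract the kernel-intersection condition from the volume hypothesis.
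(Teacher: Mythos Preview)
Your proof is correct and follows essentially the same route as the paper's: both directions hinge on the observation that $\Omega^r\wedge\bar\Omega^r\ne 0$ forces $\ker\Omega\cap\ker\bar\Omega=\{0\}$, after which a dimension count finishes the converse. You spell out the contraction argument for that key step and replace the paper's appeal to the holomorphic Darboux theorem in the forward direction with direct linear algebra, but the structure is the same.
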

\begin{rem}
The condition $\dim_\CC \ker \Omega\ge 2r$ is particularly convenient because the nullity of $\Omega$ is upper semicontinuous.
\end{rem}
\begin{proof}
If $\Omega$ is a holomorphic symplectic form then $\Omega^r\wedge \bar\Omega^r$ is non-degenerate, as is clear from the holomorphic Darboux theorem, and $\dim_\CC \ker\Omega = 2r$. Conversely, if $\Omega^r\wedge \bar\Omega^r$ vanishes nowhere then $\ker\Omega\cap\ker\bar\Omega=\{0\}$, so that there is an injective map $\ker\Omega\oplus\ker\bar\Omega\hookrightarrow T_\CC\M$, and $\dim\ker\Omega\ge 2r$ implies that this is also surjective.
\end{proof}

\begin{proposition} \label{prop:realSymp}
If $\Omega=\omega_1+i\omega_2$ is a holomorphic symplectic form on a complex manifold $(\M,J)$, where $\omega_1,\omega_2$ are real 2-forms, then $\omega_1,\omega_2$ are symplectic forms and $\omega_2(v,w)=-\omega_1(Jv,w)=-\omega_1(v,Jw)$.
\end{proposition}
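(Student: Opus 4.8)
The plan is to start from the two defining properties of a holomorphic symplectic form on the complex manifold $(\M, J)$: first, that $\Omega$ is of type $(2,0)$, meaning $\Omega(Jv, w) = \Omega(v, Jw) = i\,\Omega(v,w)$ for all real vector fields $v, w$ (equivalently, $\Omega$ vanishes whenever either argument is a $(0,1)$-vector); and second, that $\Omega$ is closed and non-degenerate as a pairing $T^{(1,0)} \to T^*_{(1,0)}$. Writing $\Omega = \omega_1 + i\omega_2$ with $\omega_1, \omega_2$ real, I would first extract the relation between $\omega_2$ and $\omega_1$ by taking real and imaginary parts of the type-$(2,0)$ identity $\Omega(Jv, w) = i\,\Omega(v,w)$: the imaginary part gives $\omega_1(Jv, w) = -\omega_2(v,w)$, i.e. $\omega_2(v,w) = -\omega_1(Jv, w)$, and similarly from $\Omega(v, Jw) = i\,\Omega(v,w)$ one gets $\omega_2(v,w) = -\omega_1(v, Jw)$. (One should note $\omega_1, \omega_2$ are automatically closed since $\Omega$ is and $d$ commutes with taking real/imaginary parts.)

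Next I would prove non-degeneracy of $\omega_1$ (the argument for $\omega_2$ being identical, or following from $\omega_2(v,w) = -\omega_1(Jv,w)$ together with $J$ being an isomorphism). Suppose $\omega_1(v, \cdot) = 0$ for some real tangent vector $v$. Using the identity $\omega_1(v, Jw) = -\omega_2(v, w)$ just derived, this forces $\omega_2(v, \cdot) = 0$ as well, hence $\Omega(v, \cdot) = 0$ on all real vectors, and therefore on all complexified vectors by $\CC$-bilinearity — so $v \in \ker\Omega$. But $v$ is real, so $v \in \ker\Omega \cap \overline{\ker\Omega} = \ker\Omega \cap \ker\bar\Omega = \{0\}$ by Definition \ref{def:holoSympDef} (or by Proposition \ref{prop:holoVol}), whence $v = 0$. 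This shows $\omega_1$ is a non-degenerate closed real $2$-form, i.e. a symplectic form, and likewise for $\omega_2$.

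I do not expect any serious obstacle here — the statement is essentially the real-coordinate unpacking of what it means for $\Omega$ to be of type $(2,0)$ and non-degenerate, and the only mild subtlety is keeping the conventions straight when splitting the $(2,0)$ condition into real and imaginary parts (in particular making sure the sign in $\omega_2(v,w) = -\omega_1(Jv,w)$ matches the chosen convention for how $J$ acts and how $\Omega = \omega_1 + i\omega_2$ is assembled). If one prefers, the non-degeneracy step can instead be phrased via the top-form criterion of Proposition \ref{prop:holoVol}: $\Omega^r \wedge \bar\Omega^r \neq 0$ expands (up to a nonzero constant) in terms of $\omega_1^{2r}$ and lower, and a short linear-algebra argument shows $\omega_1$ must then be non-degenerate; but the direct kernel argument above is cleaner and self-contained.
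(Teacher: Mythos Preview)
Your argument is correct and matches the paper's proof essentially line for line: both extract the identity $\omega_2(v,w)=-\omega_1(Jv,w)=-\omega_1(v,Jw)$ from the $(2,0)$ condition $\Omega(Jv,w)=i\Omega(v,w)$, and both deduce non-degeneracy by showing $\omega_1(v,\cdot)=0\Rightarrow \Omega(v,\cdot)=0\Rightarrow v\in\ker\Omega\cap\ker\bar\Omega=\{0\}$. One tiny slip: the equation $\omega_1(Jv,w)=-\omega_2(v,w)$ comes from the \emph{real} part of $\Omega(Jv,w)=i\Omega(v,w)$, not the imaginary part---but this does not affect the validity of your proof.
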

\begin{proof}The symplectic forms $\omega_1$ and $\omega_2$ are clearly closed.

Since $\Omega$ is a $(2,0)$-form, we have
\be \omega_1(Jv,w)+i\omega_2(Jv,w)=\Omega(Jv,w)=i \Omega(v,w)=-\omega_2(v,w)+i\omega_1(v,w) \ee
for all vectors $v,w$, and so if $v,w$ are real then taking real parts gives $\omega_2(v,w)=-\omega_1(Jv,w)$. An identical argument gives $\omega_2(v,w)=-\omega_1(v,Jw)$. These results then extend by complex linearity to all vectors $v,w$.

To see that $\omega_1$ is non-degenerate, we observe that
\be \omega_1(v,\cdot)=0\Rightarrow \Omega(v,\cdot)=\omega_1(v,(1-iJ)\cdot)=0 \Rightarrow v\in T^{(0,1)}\M \ . \ee
An identical argument using $\bar\Omega$ shows that $v\in T^{(1,0)}\M$, and hence $v=0$. Similar arguments work for $\omega_2$, since $\Omega(v,\cdot)=\omega_2(v,(J+i)\cdot)$. (Alternatively, we can use the fact that $e^{i\theta}\Omega$ is a holomorphic symplectic form for any $\theta\in\RR$, and so in particular for $\theta=-\pi/2$.) 
\end{proof}

As promised in the introduction, we now demonstrate that $\kappa$ automatically exists once you have a $\mathbb{P}^1$-family of holomorphic symplectic forms $\varpi(\zeta)$.
\begin{proposition}\label{prop:varpiRealJ}
Let $\omega_+$ be a holomorphic symplectic form on a manifold $\M$ with associated complex structure $J^{(0)}$.
Let
\be \varpi(\zeta) = - \frac{i}{2\zeta} \omega_+ + \omega_3 - \frac{i}{2} \zeta \omega_- \label{eq:varpiExp} \ee
be a family of holomorphic symplectic forms on $\M$ labeled by $\zeta\in \CC^\times$, with $\omega_- = \bar \omega_+$ and $\bar\omega_3 = \omega_3$.
Then,
\begin{enumerate}[(a)]
\item the complex structures defined by these holomorphic symplectic forms satisfy $J^{(-1/\bar\zeta)} = - J^{(\zeta)}$ for all $\zeta\in \PP^1$. 
\item $\omega_3 \in \Omega^{1,1}(\M^{(0)})$ and, moreover, is non-degenerate;
\item there is an isomorphism $\kappa: T^{(0,1)} \M^{(0)} \to T^{(1,0)} \M^{(0)}$ such that 
$T^{(0,1)} \M^{(\zeta)} = (1+\zeta \kappa) T^{(0,1)} \M^{(0)}$ for all $\zeta\in\CC$;
explicitly, using the non-degeneracy of $\omega_+$, $\kappa$ is defined by 
\begin{equation} \iota_{\kappa v} \omega_+ = -2 i  \iota_v \omega_3. \end{equation}
Moreover, $\kappa$ satisfies 
\begin{equation} \iota_{\kappa v} \omega_3 = \frac{i}{2}  \iota_v \omega_-. \end{equation}
\end{enumerate}
\end{proposition}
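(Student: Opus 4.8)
The plan is to work patch by patch on $\PP^1_\zeta$ and exploit the fact that, by hypothesis, each $\varpi(\zeta)$ for $\zeta \in \CC^\times$ is a holomorphic symplectic form, so $\ker \varpi(\zeta) = T^{(0,1)}\M^{(\zeta)}$ and $\ker \overline{\varpi(\zeta)} = T^{(1,0)}\M^{(\zeta)}$. For part (a), I would compute $\overline{\varpi(\zeta)}$ directly from \eqref{eq:varpiExp} using $\overline{\omega_\pm} = \omega_\mp$ and $\overline{\omega_3} = \omega_3$, and observe that $\overline{\varpi(\zeta)}$ is proportional to $\varpi(-1/\bar\zeta)$ (up to a nonvanishing scalar in $\zeta$). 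Since the complex structure associated to a holomorphic symplectic form is unchanged under rescaling by a nonzero constant, and since $\ker \overline{\varpi(\zeta)} = T^{(1,0)}\M^{(\zeta)}$ equals $\ker \varpi(-1/\bar\zeta) = T^{(0,1)}\M^{(-1/\bar\zeta)}$, this gives $T^{(1,0)}\M^{(\zeta)} = T^{(0,1)}\M^{(-1/\bar\zeta)}$, i.e. $J^{(-1/\bar\zeta)} = -J^{(\zeta)}$; the cases $\zeta = 0, \infty$ follow by the same computation with $\omega_+$ and $\omega_-$ directly, or by continuity.

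For part (b), I would argue that $\omega_3 \in \Omega^{1,1}(\M^{(0)})$ by extracting the $\zeta^0$-behavior of the holomorphicity of the family. More concretely: $\omega_+$ is a $(2,0)$-form on $\M^{(0)}$ and $\omega_- = \bar\omega_+$ is a $(0,2)$-form on $\M^{(0)}$, so the bidegree decomposition of $\varpi(\zeta)$ with respect to $J^{(0)}$ reads $\varpi(\zeta) = -\tfrac{i}{2\zeta}\omega_+ + (\omega_3^{2,0} + \omega_3^{1,1} + \omega_3^{0,2}) - \tfrac{i}{2}\zeta\omega_-$. I would then use that $\varpi(\zeta)$ spans a holomorphic $\Oo(-2)$-type family — or more directly, that $\zeta\,\varpi(\zeta)$ extends holomorphically across $\zeta = 0$ with value $-\tfrac{i}{2}\omega_+$ there, hence its $(1,1)$ and $(0,2)$ parts (w.r.t.\ $J^{(0)}$) must vanish to first order at $\zeta=0$ — forcing $\omega_3^{0,2} = 0$, and by reality ($\overline{\omega_3} = \omega_3$, which swaps $\omega_3^{2,0}$ and $\omega_3^{0,2}$) also $\omega_3^{2,0} = 0$. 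Alternatively, and perhaps more cleanly, I'd note that $\varpi(\zeta)$ being a holomorphic symplectic form means $\ker\varpi(\zeta) = T^{(0,1)}\M^{(\zeta)}$; applying Proposition \ref{prop:iso0tozeta}-style reasoning, $T^{(0,1)}\M^{(\zeta)}$ varies holomorphically, and pairing against fixed $(1,0)$ and $(0,1)$ vectors of $\M^{(0)}$ isolates the bidegrees. Non-degeneracy of $\omega_3$ on $\M^{(0)}$ then follows since $\omega_3 = \omega_3^{1,1}$ is the $(1,1)$-part, and $\varpi(\zeta)$ non-degenerate for all small $\zeta$ forces the leading terms in a suitable expansion to pair non-degenerately; the cleanest route is probably $\omega_3(v,\cdot) = 0$ with $v \in T^{(1,0)}\M^{(0)}$ implies, using $\iota_v\omega_+ \ne 0$ lies in $T^*_{(1,0)}$, that $\iota_v\varpi(\zeta) = -\tfrac{i}{2\zeta}\iota_v\omega_+ \ne 0$, which is consistent — so I'd instead derive non-degeneracy of $\omega_3$ from the formula $\iota_{\kappa v}\omega_+ = -2i\,\iota_v\omega_3$ in part (c) together with non-degeneracy of $\omega_+$ and bijectivity of $\kappa$.

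For part (c): define $\kappa: T_\CC\M \to T_\CC\M$ by $\iota_{\kappa v}\omega_+ = -2i\,\iota_v\omega_3$ using non-degeneracy of $\omega_+$; one checks $\kappa$ is well-defined and, since $\omega_+$ is $(2,0)$ on $\M^{(0)}$, its image lands in the subspace paired non-trivially by $\omega_+$, i.e.\ $\kappa$ maps into $T^{(1,0)}\M^{(0)}$, and $\kappa(T^{(1,0)}\M^{(0)}) = 0$ because $\iota_v\omega_3$ for $v\in T^{(1,0)}$ would have to be $\iota_{\kappa v}\omega_+$ for some... — here I'd need to be careful, and the right statement is that $\kappa$ restricted to $T^{(0,1)}\M^{(0)} \to T^{(1,0)}\M^{(0)}$ is an isomorphism, which follows from comparing ranks and kernels of $\omega_3$ and $\omega_+$ against the bidegree decomposition from part (b). The core identity to verify is that for $v \in T^{(0,1)}\M^{(0)}$, the vector $(1+\zeta\kappa)v$ lies in $\ker\varpi(\zeta) = T^{(0,1)}\M^{(\zeta)}$: compute
\[
\iota_{(1+\zeta\kappa)v}\varpi(\zeta) = -\tfrac{i}{2\zeta}\iota_v\omega_+ - \tfrac{i}{2\zeta}\zeta\,\iota_{\kappa v}\omega_+ + \iota_v\omega_3 + \zeta\,\iota_{\kappa v}\omega_3 - \tfrac{i}{2}\zeta\,\iota_v\omega_- - \tfrac{i}{2}\zeta^2\,\iota_{\kappa v}\omega_-,
\]
where $\iota_v\omega_+ = 0$ since $v \in T^{(0,1)}\M^{(0)}$ and $\omega_+$ is $(2,0)$. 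Substituting $\iota_{\kappa v}\omega_+ = -2i\,\iota_v\omega_3$ kills the $\zeta^0$ term; then the second moreover-identity $\iota_{\kappa v}\omega_3 = \tfrac{i}{2}\iota_v\omega_-$ (which I would take as the \emph{definition-forced} requirement and verify is consistent by applying $\kappa$ twice, or by a bidegree argument) kills the $\zeta^1$ term; and the $\zeta^2$ term $\iota_{\kappa v}\omega_-$ vanishes since $\kappa v \in T^{(1,0)}\M^{(0)}$ while $\omega_-$ is $(0,2)$. Hence $\iota_{(1+\zeta\kappa)v}\varpi(\zeta) = 0$, so $(1+\zeta\kappa)v \in T^{(0,1)}\M^{(\zeta)}$; since $1+\zeta\kappa$ is injective on $T^{(0,1)}\M^{(0)}$ (its inverse is built as in \eqref{eq:inverse}) and both spaces have the same complex dimension $2r$, it is an isomorphism onto $T^{(0,1)}\M^{(\zeta)}$.

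\textbf{Main obstacle.} The delicate point is the second identity $\iota_{\kappa v}\omega_3 = \tfrac{i}{2}\iota_v\omega_-$: $\kappa$ is \emph{defined} only through its action via $\omega_+$, so this identity is a genuine constraint that must be extracted from the hypothesis that $\varpi(\zeta)$ is holomorphic symplectic for \emph{every} $\zeta$, not just one. I expect the cleanest derivation is to not prove the two "moreover" identities independently but rather to observe that requiring $(1+\zeta\kappa)v \in \ker\varpi(\zeta)$ for all $\zeta$ forces, by matching powers of $\zeta$, exactly the relations $\iota_{\kappa v}\omega_+ = -2i\iota_v\omega_3$ (coefficient of $\zeta^0$) and $\iota_{\kappa v}\omega_3 = \tfrac{i}{2}\iota_v\omega_-$ (coefficient of $\zeta^1$) — i.e.\ consistency of these two is equivalent to $\kappa$ squaring correctly, and one verifies $\kappa^2$ acts as $-\mathrm{id}$-type scaling on the relevant subspace using $\overline{\omega_3} = \omega_3$ and $\overline{\omega_+} = \omega_-$. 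That $\ker\varpi(\zeta)$ has the right dimension $2r$ to actually be all of $T^{(0,1)}\M^{(\zeta)}$ (rather than a proper subspace) is guaranteed by the hypothesis that each $\varpi(\zeta)$ is a holomorphic symplectic form.
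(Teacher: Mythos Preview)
Your argument for (a) is fine and matches the paper. The difficulties are in (b) and (c), where the logical order is inverted relative to the paper, and this inversion creates real gaps.

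\textbf{Part (b).} The claim that ``$\zeta\varpi(\zeta)$ extends holomorphically across $\zeta=0$ with value $-\tfrac{i}{2}\omega_+$, hence its $(1,1)$ and $(0,2)$ parts (w.r.t.\ $J^{(0)}$) must vanish to first order'' is not valid: the expression $-\tfrac{i}{2}\omega_+ + \zeta\omega_3 - \tfrac{i}{2}\zeta^2\omega_-$ is polynomial in $\zeta$ regardless of the $J^{(0)}$-bidegree of $\omega_3$, so holomorphicity in $\zeta$ alone imposes no constraint. The paper instead uses a purely algebraic trick: since $\varpi(\zeta)$ has type $(2,0)$ on $\M^{(\zeta)}$ and $\dim_\CC \M^{(\zeta)}=2r$, one has $\varpi(\zeta)^{r+1}=0$ identically; expanding in powers of $\zeta$ forces $\omega_+^r\wedge\omega_3=0$, and non-degeneracy of $\omega_+$ then kills $\omega_3^{(0,2)}$. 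For the non-degeneracy of $\omega_3$, the paper does not defer to (c) (which would be circular, since (c) uses it): instead, for $|\zeta|=1$ one has $\varpi(e^{i\theta})=\omega_3 - i(\cos\theta\,\omega_1+\sin\theta\,\omega_2)$ with real part $\omega_3$, and Proposition~\ref{prop:realSymp} gives that the real part of a holomorphic symplectic form is symplectic.

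\textbf{Part (c).} Your order is: define $\kappa$ by $\iota_{\kappa v}\omega_+=-2i\,\iota_v\omega_3$, then try to verify $(1+\zeta\kappa)v\in\ker\varpi(\zeta)$. As you correctly identify, this needs the \emph{second} identity $\iota_{\kappa v}\omega_3=\tfrac{i}{2}\iota_v\omega_-$, which is not a formal consequence of the first. Your ``Main obstacle'' paragraph says the cleanest route is to require $(1+\zeta\kappa)v\in\ker\varpi(\zeta)$ and match powers of $\zeta$ --- but that is exactly what you are trying to \emph{prove}, so this is circular. The paper reverses the order: it first shows transversality $T^{(0,1)}\M^{(\zeta)}\cap T^{(1,0)}\M^{(0)}=\{0\}$ (here is where $\omega_3\in\Omega^{1,1}$ from (b) is used, to separate $\iota_v\omega_+$ and $\iota_v\omega_3$ by type), which forces $T^{(0,1)}\M^{(\zeta)}$ to be the graph of some $\kappa(\zeta):T^{(0,1)}\M^{(0)}\to T^{(1,0)}\M^{(0)}$. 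Only then does one write $\iota_{v+\kappa(\zeta)v}\varpi(\zeta)=0$ and separate into $(1,0)$ and $(0,1)$ parts, obtaining \emph{both} identities simultaneously with $\kappa(\zeta)$ in place of $\zeta\kappa$; the first identity then immediately gives $\kappa(\zeta)=\zeta\kappa(1)$. Finally, your appeal to \eqref{eq:inverse} for injectivity of $1+\zeta\kappa$ presupposes $\kappa^2=-1$ on the relevant subspace, which is not yet established at this point; the paper instead deduces that $\kappa$ is an isomorphism from the first identity together with the non-degeneracy of $\omega_3$ and $\omega_+$ already obtained in (b).
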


\begin{proof}
\begin{enumerate}[(a)]
\item For all $\zeta\in \CC^\times$ we have $\overline{\varpi(-1/\bar\zeta)} = \varpi(\zeta)$, so that $T^{(0,1)}\M^{(\zeta)} = T^{(1,0)}\M^{(-1/\bar\zeta)}$. Since $\omega_-=\bar\omega_+$, we also have $T^{(0,1)}\M^{(0)}=T^{(1,0)}\M^{(\infty)}$.
\item We claim that $\omega_3 \in \Omega^{1,1}(\M^{(0)})$. Since $\dim_\CC \M=2r$,  consider the wedge product of $(r+1)$ copies of $\omega(\zeta)$, here denoted $\omega(\zeta)^{(r+1)} \in \Omega^{2(r+1),0}(\M^{(0)})=\{0\}$.
Expanding this in powers of $\zeta$, 
\[ \omega(\zeta)^{(r+1)} = \left(- \frac{i}{2 \zeta} \omega_+\right)^{(r+1)} + (r+1) 
 \left(- \frac{i}{2 \zeta} \omega_+\right)^{r} \wedge \omega_3 + \cdots  + (r+1) 
 \left(- \frac{i}{2}\zeta \omega_-\right)^{r} \wedge \omega_3  +  \left(- \frac{i}{2}\zeta \omega_-\right)^{(r+1)},\]
 each term individually vanishes; thus,  $\omega_+^r \wedge \omega_3=0$ and $\omega^r_- \wedge \omega_3=0$. Since $0=\omega_+^r \wedge \omega_3^{(0,2)} \in \Omega^{(2r, 2)}(\M^{(0)})$, it follows from the non-degeneracy of $\omega_+$ and hence the non-vanishing of $\omega_+^r$ that $\omega_3^{(0,2)}=0$; a similar computation yields $\omega_3^{(2,0)}=0$. Thus, $\omega_3  \in \Omega^{1,1}(\M^{(0)})$, as claimed.
 
We note that if $\zeta = \exp(i \theta)$, $\varpi(\zeta)=\omega_3 - i (\cos \theta \omega_1 + \sin \theta \omega_2)$. From Proposition \ref{prop:realSymp}, the non-degeneracy of $\varpi(\zeta)$ implies its real part $\omega_3$ is also non-degenerate. 
 
\item First, we observe that $T^{(0,1)} \M^{(\zeta)} \cap T^{(1, 0)} \M^{(0)}=\{0\}$ for $\zeta \in \CC$ : if $v \in T^{(0,1)} \M^{(\zeta)} \cap T^{(1, 0)} \M^{(0)}$, then 
\be 0 = \iota_v \varpi(\zeta) =  - \frac{i}{2\zeta} \iota_v \omega_+ + \iota_v\omega_3.\ee
The $(1,0)$ and $(0,1)$-forms with respect to $\M^{(0)}$ separately vanish, i.e. $- \frac{i}{2\zeta} \iota_v \omega_+=0$ and $\iota_v\omega_3=0$. Since $\omega_+: T^{(1, 0)} \M^{(0)} \to T^*_{(1,0)} \M^{(0)}$ is non-degenerate, $v=0$.
It follows that $T^{(0,1)} \M^{(\zeta)} \subset T_\CC \M =T^{(0,1)} \M^{(0)}\oplus T^{(1,0)} \M^{(0)}$ maps surjectively onto $T^{(0,1)} \M^{(0)}$. Reversing this, $T^{(0,1)} \M^{(\zeta)}$ is a graph over $T^{(0,1)} \M^{(0)}$ (see Figure \ref{fig:graph}).
\begin{figure}[h!]
    \begin{centering} 
    \includegraphics[height=1.2in]{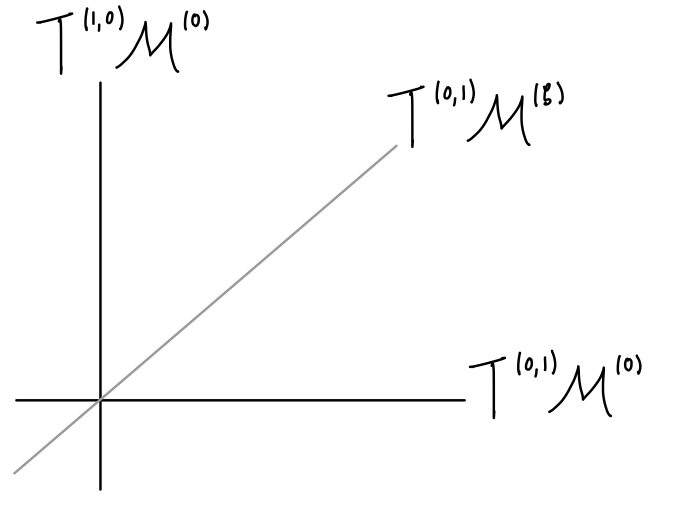} 
    \caption{\label{fig:graph} 
     $T^{(0,1)} \M^{(\zeta)}$ is the graph of $\kappa(\zeta)$ over $T^{(0,1)} \M^{(0)}$.
    }
    \end{centering}
    \end{figure}
Hence, 
for any fixed parameter $\zeta \in \CC$ there is a map $\kappa(\zeta): T^{(0,1)} \M^{(0)} \to T^{(1,0)} \M^{(0)}$ such that $(1 + \kappa(\zeta)): T^{(0,1)} \M^{(0)} \to T^{(0,1)} \M^{(\zeta)}$.  A priori, this is not necessarily an isomorphism.  
Let $v \in T^{(0,1)} \M^{(0)}$.
Then the interior product of $\varpi(\zeta) \in \Omega^{2,0}\M^{(\zeta)}$ with a vector $v + \kappa(\zeta) v \in T^{(0,1)} \M^{(\zeta)}$ vanishes, i.e.
\begin{align}
0&=\iota_{v + \kappa(\zeta) v}\varpi(\zeta) \\ \nonumber
&=  \left(- \frac{i}{2\zeta} \iota_{\kappa(\zeta) v}\omega_+ + \iota_v\omega_3\right) + \left(\iota_{\kappa(\zeta) v} \omega_3- \frac{i}{2} \zeta \iota_v\omega_- \right); 
\end{align}
the $(1,0)$ and $(0,1)$-forms components on $\M^{(0)}$ separately vanish, i.e. 
\begin{align}\label{eq:kappa}
\iota_{\kappa(\zeta) v} \omega_+ &= -2 i \zeta \iota_v \omega_3\\ \nonumber 
\iota_{\kappa(\zeta) v} \omega_3 &= \frac{i}{2} \zeta \iota_v \omega_-.
\end{align}
Now, we allow $\zeta$ to vary. Since the $\CC^\times$-family $\kappa(\zeta)$ satisfy \eqref{eq:kappa}, it follows that $\kappa(\zeta) = \zeta \kappa$  for $\kappa= \kappa(1)$. Thus, we have a map  $\kappa: T^{(0,1)} \M^{(0)} \to T^{(1,0)} \M^{(0)}$ such that 
$(1+\zeta \kappa) : T^{(0,1)} \M^{(0)} \to T^{(0,1)} \M^{(\zeta)} $ for all $\zeta\in\CC$. Lastly, since $\omega_3: T^{(0,1)} \M^{(0)} \to T^*_{(1,0)} \M^{(0)}$ is non-degenerate and $\omega_+$ is non-degenerate, the  \eqref{eq:kappa} implies $\kappa$ is an isomorphism.
\end{enumerate}
\end{proof}

Now, we state our main theorem, making use of the above isomorphism $\kappa$.

\begin{theorem} \label{thm:twist}\hfill
\begin{enumerate}[(a)]
\item Let $\M$ be a manifold with a $\PP^1$-worth of complex structures $J^{(\zeta)}$ such that $J^{(-1/\bar\zeta)}=-J^{(\zeta)}$. Suppose also that there is an isomorphism $\kappa: T^{(0,1)}\M^{(0)}\to T^{(1,0)}\M^{(0)}$ such that $T^{(0,1)} \M^{(\zeta)} = (1+\zeta \kappa) T^{(0,1)} \M^{(0)}$ for all $\zeta\in\CC$.

Then, defining $J_1=J^{(i)}$, $J_2=J^{(-1)}$, and $J_3=J^{(0)}$, it follows that (i) $\kappa=J_1|_{T^{(0,1)}\M^{(0)}}$ and  (ii) the triplet of complex structures satisfies the unit quaternion algebra in 
\eqref{eq:quat}.

\item Let \be \varpi(\zeta) = - \frac{i}{2\zeta} \omega_+ + \omega_3 - \frac{i}{2} \zeta \omega_- \ee be a family of holomorphic symplectic forms such that $\omega_+$ is holomorphic symplectic and $\omega_- = \bar \omega_+$ and $\omega_3=\bar \omega_3$. 
Then,
$\M$ is a pseudo-hyper-K\"ahler manifold with pseudo-K\"ahler forms $(\omega_1,\omega_2,\omega_3)$ corresponding to the complex structures $(J_1,J_2,J_3)$, where $\omega_\pm = \omega_1 \pm i \omega_2$.

\item If the family of holomorphic symplectic forms $\varpi(\zeta)$ in (2) arises from a pseudo-hyper-K\"ahler structure on $\M$, then the pseudo-hyper-K\"ahler structure produced in (2) agrees with the original pseudo-hyper-K\"ahler structure on $\M$.
\end{enumerate}
\end{theorem}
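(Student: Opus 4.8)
\textbf{Proof plan for Theorem \ref{thm:twist}.}

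The plan is to prove the three parts in order, using the structural results already established. For part (a), I would start from the hypothesis $T^{(0,1)}\M^{(\zeta)} = (1+\zeta\kappa)T^{(0,1)}\M^{(0)}$ and compare it with Proposition \ref{prop:iso0tozeta}(a), which says $T^{(0,1)}\M^{(\zeta)} = (1+\zeta J_1)T^{(0,1)}\M^{(0)}$ once we define $J_1 = J^{(i)}$ --- but wait, that proposition is stated for a genuine pseudo-hyper-K\"ahler structure, so here I must instead argue directly: evaluating the hypothesis at $\zeta = i$ shows $T^{(0,1)}\M^{(i)} = (1+i\kappa)T^{(0,1)}\M^{(0)}$, while $J^{(i)}$ acts as $+i$ on $T^{(0,1)}\M^{(i)}$ (by definition of the anti-holomorphic tangent bundle) and I can pin down how $J^{(i)}$ acts on a vector $v + i\kappa v$ versus $v - i\kappa v$; combined with $\kappa: T^{(0,1)}\M^{(0)} \to T^{(1,0)}\M^{(0)}$ being an isomorphism this forces $\kappa = J_1|_{T^{(0,1)}\M^{(0)}}$, giving (i). For (ii), once $J_1,J_2,J_3$ are each realized as $J^{(\zeta)}$ for $\zeta = i, -1, 0$ and each satisfies $(J_\alpha)^2 = -1$, I would verify the anticommutation and product relations $J_1 J_2 = J_3$ etc.\ by computing the action of each $J_\alpha$ on the common reference space $T^{(0,1)}\M^{(0)} \oplus T^{(1,0)}\M^{(0)}$: on $T^{(0,1)}\M^{(0)}$, $J_1$ acts via $\kappa$ composed with projection, and the relation $J^{(-1/\bar\zeta)} = -J^{(\zeta)}$ together with the linear-fractional $SO(3)$ action on the twistor sphere (the first Lemma in the excerpt) identifies the algebra generated by these three operators with the quaternions.

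For part (b), I would invoke Proposition \ref{prop:varpiRealJ}: its hypotheses are exactly those of (b), so it delivers $J^{(-1/\bar\zeta)} = -J^{(\zeta)}$, the $(1,1)$-ness and non-degeneracy of $\omega_3$ on $\M^{(0)}$, and the isomorphism $\kappa$ with $T^{(0,1)}\M^{(\zeta)} = (1+\zeta\kappa)T^{(0,1)}\M^{(0)}$. Feeding this into part (a) gives a quaternionic triple $(J_1,J_2,J_3)$. It then remains to produce a compatible pseudo-Riemannian metric $g$ and to check the pseudo-K\"ahler compatibility $\omega_\alpha(v,w) = g(J_\alpha v, w)$ for all three $\alpha$ with $\omega_1 = \tfrac12(\omega_+ + \omega_-) = \Real\,\omega_+$ and $\omega_2 = \tfrac1{2i}(\omega_+-\omega_-) = \Imag\,\omega_+$. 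The natural definition is $g := -\omega_3 \circ J_3^{-1} = -\omega_3 J_3$ (equivalently $g = \omega_1 J_1 = \omega_2 J_2$, all of which must be shown to coincide), and the heart of the check is: (1) $g$ is symmetric --- this uses that $\omega_3$ is a $(1,1)$-form for $J_3$, which Proposition \ref{prop:varpiRealJ}(b) gives; (2) $g$ is real --- from $\bar\omega_3 = \omega_3$ and $\bar J_3 = J_3$; (3) $g$ is non-degenerate --- from non-degeneracy of $\omega_3$; and (4) the same $g$ satisfies the compatibility conditions for $\omega_1, \omega_2, J_1, J_2$. Step (4) is where I would use the explicit formulas $\iota_{\kappa v}\omega_+ = -2i\,\iota_v\omega_3$ and $\iota_{\kappa v}\omega_3 = \tfrac{i}{2}\iota_v\omega_-$ from Proposition \ref{prop:varpiRealJ}(c), together with $\kappa = J_1|_{T^{(0,1)}\M^{(0)}}$ and the conjugate relations on $T^{(1,0)}\M^{(0)}$ obtained by applying complex conjugation (using $\bar J_1 = J_1$, $\bar\omega_3 = \omega_3$, $\bar\omega_+ = \omega_-$), to rewrite $\omega_\alpha(J_\alpha v, w)$ in terms of $\omega_3(J_3 v, w)$ and conclude equality of all three expressions for $g$. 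Finally, $\varpi(\zeta)$ being simultaneously closed and, via part (a), equal up to scale and rotation to $\omega_{K^{(\zeta)}} + i\omega_{I^{(\zeta)}}$ would let me conclude the $\omega_\alpha$ are closed (they are $\omega_1 = \Real\varpi(i)$ type combinations of closed forms, or directly: $d\omega_+ = 0$ since $\omega_+$ is symplectic and $d\omega_3 = 0$ follows by extracting the $\zeta$-coefficient of $d\varpi(\zeta) = 0$), completing the pseudo-hyper-K\"ahler axioms.

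For part (c), the point is a uniqueness/consistency statement: if we start from an honest pseudo-hyper-K\"ahler structure $(g, \omega_\alpha, J_\alpha)$, form $\varpi(\zeta)$ as in Lemma \ref{lem:L}, and then run the construction of (b), we must recover the same data. I would argue that every piece of the output is manifestly determined by $\varpi(\zeta)$ in a way that matches: the complex structures $J^{(\zeta)}$ reconstructed in (b) are characterized by $\ker\varpi(\zeta) = T^{(0,1)}\M^{(\zeta)}$, and for the genuine structure Lemma \ref{lem:L} / Corollary \ref{cor:holsymp} already identify $\varpi(\zeta)$ as the $(2,0)$-form on the original $\M^{(\zeta)}$, so the reconstructed $J^{(\zeta)}$ equals the original one; hence $J_1 = J^{(i)}$, $J_2 = J^{(-1)}$, $J_3 = J^{(0)}$ agree with the originals (this uses the computation in Proposition \ref{prop:iso0tozeta} and equation \eqref{eq:holo1}). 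Then $\omega_1 = \Real\,\omega_+$, $\omega_2 = \Imag\,\omega_+$, $\omega_3$ are literally the original forms since $\omega_\pm = \omega_1 \pm i\omega_2$ by definition in Lemma \ref{lem:L}, and $g = -\omega_3 J_3$ recovers the original metric by the remark following the Definition of pseudo-hyper-K\"ahler manifold (``all of the above data may be recovered from the three pseudo-K\"ahler forms''). So (c) reduces to checking that the metric formula used in the proof of (b) is exactly the canonical one, which is immediate.

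\textbf{Main obstacle.} I expect the technical core to be step (4) of part (b): verifying that the single metric $g = -\omega_3 J_3$ is simultaneously compatible with all three $(\omega_\alpha, J_\alpha)$ and that the three candidate formulas $\omega_1 J_1$, $\omega_2 J_2$, $\omega_3 J_3$ agree. This requires carefully tracking the decomposition into $(1,0)$ and $(0,1)$ parts for $\M^{(0)}$, using $\kappa = J_1|_{T^{(0,1)}\M^{(0)}}$ and its conjugate on $T^{(1,0)}\M^{(0)}$, and the two identities of Proposition \ref{prop:varpiRealJ}(c) together with their complex conjugates; establishing symmetry of $g$ and reality will be routine once the $(1,1)$-type statement for $\omega_3$ is in hand, but the cross-compatibility with $J_1, J_2$ is the one place where a genuine computation with $\kappa$ is unavoidable.
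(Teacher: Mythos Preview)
Your outline is broadly workable, but it diverges from the paper's proof in part (b) and has a soft spot in part (a).

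\textbf{Part (a).} Two issues. First, a sign slip: $T^{(0,1)}\M^{(i)}$ is the $-i$ eigenspace of $J^{(i)}$, not the $+i$ eigenspace. More importantly, your argument for (i) needs the reality hypothesis $J^{(-1/\bar\zeta)}=-J^{(\zeta)}$ explicitly, not just ``$\kappa$ is an isomorphism'': the point is that $T^{(0,1)}\M^{(i)}=(1+i\kappa)T^{(0,1)}\M^{(0)}$ \emph{and} $T^{(1,0)}\M^{(i)}=T^{(0,1)}\M^{(-i)}=(1-i\kappa)T^{(0,1)}\M^{(0)}$, so $J_1$ acts as $\mp i$ on $(1\pm i\kappa)v$, and adding gives $J_1 v=\kappa v$. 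The paper does essentially this on the cotangent side. For (ii), your appeal to the $SO(3)$ lemma goes the wrong direction: that lemma says the quaternion relations are \emph{preserved} by $SO(3)$, not that they follow from your hypotheses. The paper instead computes directly on $T^{(0,1)}\M^{(0)}$ that $J_1J_3=-J_3J_1$ (since $J_1$ swaps the $\pm i$ eigenspaces of $J_3$) and that $J_3J_1=J_2$ on $T^{(0,1)}\M^{(-1)}$, then extends by reality. Your plan to ``compute the action of each $J_\alpha$ on $T^{(0,1)}\M^{(0)}\oplus T^{(1,0)}\M^{(0)}$'' is the right move; just do it directly rather than invoking $SO(3)$.

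\textbf{Part (b).} Here you take a genuinely different route. You define $g:=-\omega_3 J_3$ and then plan to verify compatibility with $(\omega_1,J_1)$ and $(\omega_2,J_2)$ by unpacking the $\kappa$-identities of Proposition~\ref{prop:varpiRealJ}(c) on $T^{(0,1)}\M^{(0)}$ and their conjugates. This works, and your identification of step (4) as the crux is accurate. The paper instead exploits part (a) to obtain the factorization
\[
\varpi(\zeta)=-\tfrac{i}{2\zeta}(1-\zeta J_1')\otimes(1-\zeta J_1')\,\omega_+,
\]
defines $g:=\tfrac12(1\otimes J_1'-J_1'\otimes 1)\omega_+$ (manifestly symmetric), and then checks in one stroke that $g=(1\otimes J_\alpha')\cdot(\text{real or imaginary part of }\varpi\text{ at a specific }\zeta)$ for each $\alpha$, invoking Proposition~\ref{prop:realSymp} to conclude. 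The paper's route packages all three compatibilities into evaluations of a single identity in $\zeta$ and avoids splitting into $(1,0)/(0,1)$ parts; yours is more elementary but requires the case-by-case bookkeeping you anticipate.

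\textbf{Part (c).} Your argument matches the paper's, which simply observes the reconstruction is canonical.
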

\begin{proof}\hfill
\begin{enumerate}[(1)]\item We begin by extending $\kappa: T^{(0,1)}\M^{(0)}\to T^{(1,0)}\M^{(0)}$ to a real automorphism $\kappa: T_\CC \M\to T_\CC \M$ which exchanges $T^{(1,0)}\M^{(0)}$ and $T^{(0,1)}\M^{(0)}$. (Explicitly, if $v\in T^{(1,0)}\M^{(0)}$ then $\kappa v := \overline{\kappa \bar v}$.) We then observe that $1+\zeta\kappa'$ maps $T^*_{(1,0)}\M^{(\zeta)}$ to $T^*_{(1,0)}\M^{(0)}$; the identification $T^{(0,1)}\M^{(\zeta)} = T^{(1,0)}\M^{(-1/\bar\zeta)} = \overline{T^{(0,1)}\M^{(-1/\bar\zeta)}} = (1-\zeta^{-1}\kappa)T^{(1,0)}\M^{(0)}$ shows that $1-\zeta^{-1}\kappa'$ maps $T^*_{(1,0)}\M^{(\zeta)}$ to $T^*_{(0,1)}\M^{(0)}$. In particular, $1+i\kappa'$ maps $T^*_{(1,0)}\M^{(i)}$ to both $T^*_{(1,0)}\M^{(0)}$ and $T^*_{(0,1)}\M^{(0)}$, and hence vanishes on $T^*_{(1,0)}\M^{(i)}$. So, $\kappa'=J_1'$ on $T^*_{(1,0)}\M^{(i)}$, and hence by reality of both $\kappa'$ and $J_1'$ we have $\kappa=J_1$ on all of $T_\CC\M$.

Similarly, $J_1J_3|_{T^{(0,1)}\M^{(0)}} = -i J_1|_{T^{(0,1)}\M^{(0)}} = -J_3 J_1|_{T^{(0,1)}\M^{(0)}}$, since $J_1$ maps $T^{(0,1)}\M^{(0)}$ to $T^{(1,0)}\M^{(0)}$, and because $J_1$ and $J_3$ are real we in fact have $J_1 J_3=-J_3 J_1$ on all of $T_\CC\M$. Then, $J_3 J_1|_{T^{(0,1)}\M^{(-1)}} = J_3 J_1(1-J_1)|_{T^{(0,1)}\M^{(0)}} = -J_1(1+J_1)J_3 |_{T^{(0,1)}\M^{(0)}} = -i (1-J_1)|_{T^{(0,1)}\M^{(0)}} = J_2|_{T^{(0,1)}\M^{(-1)}}$ (plus reality) implies that $J_3 J_1 = J_2$. So, we have \eqref{eq:quat}. Finally, we observe that $J^{(\zeta)}|_{T^{(0,1)}\M^{(\zeta)}} = \sum_\alpha c_\alpha^{(\zeta)} J_\alpha |_{T^{(0,1)}\M^{(\zeta)}}$, and hence by reality we have $J^{(\zeta)} = \sum_\alpha c_\alpha^{(\zeta)} J_\alpha$. From this, we obtain \eqref{eq:holo2}, so that $(T^*_{(1,0)}\M^{(0)})_m\otimes\Oo(-1)\cong (T^*_{(1,0)}\M)_m$.

\item 
For each $m\in \M$, we have an isomorphism $(\wedge^2 T^*_{(1,0)}\M^{(0)})_m \otimes \Oo(-2)\cong (\wedge^2 T^*_{(1,0)}\M)_m$ which implies that 
\be \varpi(\zeta)(v, w)=-\frac{i}{2 \zeta} \Omega( (1-\zeta J_1) v, (1-\zeta J_1)w) \qquad  \forall v, w \ee
for some $\Omega\in (\wedge^2 T^*_{(1,0)}\M^{(0)})_m$ (since $\varpi(\zeta)|_m$ is a holomorphic section of $(\wedge^2 T^*_{(1,0)}\M)_m \otimes \Oo(2)$ and the only holomorphic sections of the trivial bundle $(\wedge^2 T^*_{(1,0)}\M^{(0)})_m$ are constants). 
For convenience, we will instead use the compact notation
\be \varpi(\zeta) = -\frac{i}{2\zeta}(1-\zeta J_1')\otimes (1-\zeta J_1') \Omega\ee 
that omits the vectors $v, w$.
Comparing the $\zeta^{-1}$ terms shows that $\Omega=\omega_+|_m$, so that
\be \varpi(\zeta) = - \frac{i}{2\zeta}(1-\zeta J_1')\otimes (1-\zeta J_1') \omega_+ \ . \label{eq:varpiBundle} \ee
We also have
\be \varpi(\zeta)=-\frac{i}{2\zeta}(J_1'+\zeta)\otimes(J_1'+\zeta)\omega_- \ , \label{eq:varpiRealBundle} \ee
thanks to the reality condition $\overline{\varpi(-1/\bar\zeta)} = \varpi(\zeta)$. In particular, the leading behavior near $\zeta=0$ shows that $\omega_+ = J_1'\otimes J_1' \omega_-$.

With this, we are ready to define a metric on $\M$:\footnote{Note that this agrees with the special case of \eqref{eq:holoG} with $\zeta=0 e^{-i\pi/2}\in \widetilde{\PP^1}$, the real blowup of $\mathbb{P}^1$ at $0, \infty$.}
\be g := \frac{1\otimes J_1'-J_1'\otimes 1}{2} \omega_+ \ . \label{eq:twistMet} \ee
This is a symmetric tensor. A short computation using \eqref{eq:varpiBundle} yields that\footnote{As an alternative to some of these computations, one may use $\omega_- = J_1'\otimes J_1' \omega_+$ to show that $g=(1\otimes J_1')\Real \omega_+ = (1\otimes J_2') \Imag \omega_+$.}
\be g = (1\otimes J_1')\Imag \varpi(-1) = (1\otimes J_2')\Imag \varpi(-i) = (1\otimes J_3') \Real \varpi(-1) \ , \ee
so that Proposition \ref{prop:realSymp} implies that $g$ is a pseudo-K\"ahler metric in complex structures $J_1,J_2,J_3$ with associated pseudo-K\"ahler forms $\omega_1,\omega_2,\omega_3$. For example, we have
\begin{align}
(1\otimes J_2')\Imag\varpi(-i) &= \frac{1}{2i}(1\otimes J_2')(\varpi(-i)-\varpi(i)) \nonumber \\
&= \frac{1}{4i}(1\otimes J_1')(1\otimes J_3')((1+iJ_1')\otimes (1+iJ_1')+(1-iJ_1')\otimes (1-iJ_1'))\omega_+ \nonumber \\
&= \frac{1}{4i}(1\otimes J_1')((1+iJ_1')\otimes (1-iJ_1')+(1-iJ_1')\otimes (1+iJ_1'))(1\otimes J_3')\omega_+ \nonumber \\
&= \frac{1}{4} (1\otimes J_1')((1+iJ_1')\otimes (1-iJ_1')+(1-iJ_1')\otimes (1+iJ_1')) \omega_+ \nonumber \\
&= \frac{1}{4}((1+iJ_1')\otimes (i+J_1') + (1-iJ_1')\otimes (-i+J_1'))\omega_+ \nonumber \\
&= \half (1\otimes J_1' - J_1' \otimes 1)\omega_+ \nonumber \\
&= g \ ;
\end{align}
the other computations are similar.
\item It is clear that the metric $g$ in \eqref{eq:twistMet} is the original pseudo-hyper-K\"ahler metric.
\end{enumerate}
\end{proof}

We briefly compare this with the usual statement of the twistor theorem:
\begin{theorem}[Construction of hyper-K\"ahler manifolds from twistor spaces, {\cite[Theorem 3.3]{hitchin:hkSUSY}}] \label{thm:HKLR}Let $\Z^{4r+2}$ by a real manifold 
carrying all the structures of Proposition \ref{prop:twistorprop}. Let $\widetilde{\M}$ be the space of all real holomorphic sections of $\Z$ having normal bundle isomorphic to $\mathcal{O}(1)^{\oplus 2 r}$. Then, $\widetilde{\M}$ admits the structure of a smooth manifold and a
pseudo-hyper-K\"ahler structure. 
\end{theorem}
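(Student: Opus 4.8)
The plan is to reduce Theorem \ref{thm:HKLR} to Theorem \ref{thm:twist}(b) by constructing, on $\widetilde{\M}$, a $\CC^\times$-family of holomorphic symplectic forms of the required shape, so that part (b) directly endows $\widetilde{\M}$ with a pseudo-hyper-K\"ahler structure. First I would establish the smooth manifold structure on $\widetilde{\M}$. For a real holomorphic section $s$ with $N_{s(\PP^1)/\Z}\cong \Oo(1)^{\oplus 2r}$, deformation theory gives $T_s\widetilde{\M}\cong H^0(\PP^1, N_{s(\PP^1)/\Z})$ (after imposing the reality constraint, which cuts this down to a real form of dimension $4r$), and $H^1(\PP^1,\Oo(1)^{\oplus 2r})=0$ guarantees unobstructedness, so Kodaira's theorem gives a smooth $4r$-dimensional family; openness of the normal-bundle condition (upper semicontinuity of $h^1$) ensures $\widetilde{\M}$ is a manifold. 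Concretely, I would identify $T_s\widetilde{\M}$ with $H^0_\RR((T^{(1,0)}\M)_m)$ exactly as in the discussion preceding Lemma \ref{lem:realHol}, since the normal bundle to $s(\PP^1)$ sits inside the vertical tangent bundle $T^{(1,0)}\M=\ker p_*$.

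Next I would build the holomorphic symplectic forms. The twisted fiberwise form $\varpi(\zeta)$ on $\Z$ is a holomorphic section of $\wedge^2 T^*_{(1,0)}\M\otimes p^*\Oo(2)$; pulling back along a section $s$ and pairing two vertical tangent vectors $X,Y\in T_s\widetilde{\M}\cong H^0(N_s)\subset H^0((T^{(1,0)}\M)_m)$ yields, via the natural pairing, a section of $\Oo(1)\otimes\Oo(1)\otimes\Oo(2)^{-1}\otimes\,(\text{stuff})$ — more carefully, using Lemma \ref{lem:realHol} and the isomorphism $(T^{(1,0)}\M)_m\cong (T^{(1,0)}\M^{(0)})_m\otimes\Oo(1)$ together with $\varpi(\zeta)$ being an $\Oo(2)$-twisted form, $\varpi(\zeta)(X(\zeta),Y(\zeta))$ is a holomorphic function of $\zeta$ of weight matching $\Oo(2)\otimes\Oo(1)^{\otimes 2}$ against the appropriate trivialization, i.e. it has at most a simple pole at $0$ and at $\infty$ and is holomorphic in $\CC^\times$, hence of the form $-\tfrac{i}{2\zeta}\omega_+(X,Y)+\omega_3(X,Y)-\tfrac{i}{2}\zeta\,\omega_-(X,Y)$ for two-forms $\omega_\pm,\omega_3$ on $\widetilde{\M}$. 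The reality of the section together with $\rho^*\varpi=\overline\varpi$ forces $\omega_-=\bar\omega_+$ and $\bar\omega_3=\omega_3$. That each $\varpi(\zeta)$ is closed follows because it is a closed form on the fiber $\M^{(\zeta)}$ (being a $(2,0)$-form annihilated by $\bar\partial$, and in fact $d$-closed on $\Z$ restricted fiberwise); and nondegeneracy on $T_s\widetilde{\M}$ for each $\zeta$ follows from the normal bundle condition — this is precisely where $N_s\cong\Oo(1)^{\oplus 2r}$ rather than some other degree-$2r$ bundle is used, since the pairing $H^0(\Oo(1)^{\oplus 2r})\times H^0(\Oo(1)^{\oplus 2r})\to H^0(\Oo(2))$ induced by a fiberwise-nondegenerate $\Oo(2)$-valued form is itself nondegenerate fiber by fiber exactly when the normal bundle splits as $\Oo(1)^{\oplus 2r}$. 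So $\varpi(\zeta)$ is a holomorphic symplectic form on $\widetilde\M$ for every $\zeta\in\CC^\times$, and $\omega_+$ (the $\zeta\to 0$ leading term, equivalently the pullback of $\varpi$ to the $\zeta=0$ fiber) is holomorphic symplectic as well.

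Finally I would invoke Theorem \ref{thm:twist}(b): the family $\varpi(\zeta)=-\tfrac{i}{2\zeta}\omega_++\omega_3-\tfrac{i}{2}\zeta\omega_-$ on $\widetilde\M$ satisfies all the hypotheses there, so $\widetilde\M$ carries a pseudo-hyper-K\"ahler structure with pseudo-K\"ahler forms $(\omega_1,\omega_2,\omega_3)$, $\omega_\pm=\omega_1\pm i\omega_2$. One should also check that the complex structure $J^{(\zeta)}$ on $\widetilde\M$ produced this way matches the one induced on $\widetilde\M$ by viewing it near a section $s$ as parametrizing sections through a fixed point of $\Z^{(\zeta)}$ — i.e. that the $\zeta=0$ fiber $\M^{(0)}$ really is biholomorphic to $\widetilde\M$ with $J^{(0)}$ — which is the content of the identification of $T_s\widetilde\M$ with $(T^{(1,0)}\M^{(0)})_m$ via Lemma \ref{lem:realHol} and its analogue here. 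I expect the main obstacle to be the careful bookkeeping of which $\Oo(k)$ twist the pairing $\varpi(\zeta)(X(\zeta),Y(\zeta))$ lands in, and in particular pinning down that the normal bundle being $\Oo(1)^{\oplus 2r}$ (and not merely of degree $2r$) is precisely equivalent to fiberwise nondegeneracy of the resulting two-forms; once that is in place, everything else is either standard Kodaira deformation theory or a direct appeal to Theorem \ref{thm:twist}.
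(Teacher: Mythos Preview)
The paper does not give a proof of Theorem \ref{thm:HKLR}. It is simply stated and attributed to \cite[Theorem 3.3]{hitchin:hkSUSY}; the paragraph following it only explains, via Lemma \ref{lem:realHol}, why the metric \eqref{eq:twistMet} produced by the paper's Theorem \ref{thm:twist} coincides with the metric \eqref{eq:twistMetHKLR} that Hitchin--Karlhede--Lindstr\"om--Ro\v{c}ek construct on $\widetilde\M$ (Corollary \ref{cor:match}). There is thus no ``paper's own proof'' to compare your proposal against.

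Your strategy --- pull the fiberwise form $\varpi(\zeta)$ back to $\widetilde\M$ along the evaluation maps, read off $\omega_\pm,\omega_3$, and then invoke Theorem \ref{thm:twist}(b) --- is a legitimate alternative route, and in fact runs the paper's logic in reverse: the paper's point is precisely that one can bypass constructing $\widetilde\M$ when one already has $\M$, whereas you are using that shortcut to reconstruct the original theorem. Two places would need care if you carried this out. First, your line-bundle accounting is slightly off: with $N_s\cong\Oo(1)^{\oplus 2r}$ and $\varpi|_s\in H^0(\wedge^2 N_s^*\otimes\Oo(2))$, the pairing $\varpi(X(\zeta),Y(\zeta))$ lands in $H^0(\Oo(2))$, i.e.\ a quadratic polynomial in $\zeta$ in the standard trivialization, not something with simple poles; the $-\tfrac{i}{2\zeta}$ in the paper's expansion reflects a different trivialization of the $\Oo(2)$ twist and is purely conventional. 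Second, to apply Theorem \ref{thm:twist}(b) you need $\omega_+$ to be holomorphic symplectic on $\widetilde\M$, which amounts to $\mathrm{ev}_0:\widetilde\M\to p^{-1}(0)$ being a local diffeomorphism, equivalently that nonzero real holomorphic sections of $N_s$ are nowhere vanishing. The paper establishes this only when $\Z$ already comes from a pseudo-hyper-K\"ahler manifold (Lemma \ref{lem:realHol} and the footnote after \eqref{eq:realDual}), so in the abstract setting you must supply it directly from the real structure on $\Oo(1)^{\oplus 2r}$.
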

One of the key steps in Hitchin--Karlhede--Lindstr\"om--Ro\v{c}ek is the identification of $\mathcal{Z}\simeq\mathcal{M} \times \mathbb{P}^1$ as real manifolds via the space of real holomorphic sections of $\Z$ having normal bundle isomorphic to $\mathcal{O}(1)^{\oplus 2r}$. 
However, 
in our concrete situation, we begin with an identification  $\mathcal{Z}\simeq\mathcal{M} \times \mathbb{P}^1$ as real manifolds and a canonical real structure. The isomorphism in Lemma \ref{lem:realHol}
is the key to understanding why the metric \eqref{eq:twistMet} on $\M$ is the same as that defined in \cite{hitchin:hkSUSY}:
the metric of \cite[(3.103)]{hitchin:hkSUSY}, expressed as a bilinear form on the space of real holomorphic sections of $(T^{(1,0)}\M)_m$, denoted $H^0_\RR((T^{(1,0)}\M)_m)$, is
\be g(a-\zeta j a, b-\zeta j b) = \half(\omega_+(a, j b)-\omega_+(ja, b)) \ , \label{eq:twistMetHKLR} \ee
where $j$ is an antilinear operator on $(T^{(1,0)}\M^{(0)})_m$ which squares to $-1$ such that real holomorphic sections are those of the form $a-\zeta j a$.
As we showed in Lemma \ref{lem:realHol}, if $\Z$ and its family of holomorphic symplectic forms $\varpi$ comes from a pseudo-hyper-K\"ahler metric, then  \eqref{eq:realDual} shows that $j=J_1 \overline{\cdot}$. With this identification, the bilinear form in \eqref{eq:twistMetHKLR} agrees with the bilinear form we defined in \eqref{eq:twistMet}, which we state as a corollary for emphasis.
\begin{corollary}[Corollary to Lemma \ref{lem:realHol} \& Theorem \ref{thm:twist}c]\label{cor:match}
The pseudo-hyper-K\"ahler structure produced in Theorem \ref{thm:twist}b coincides with the pseudo-hyper-K\"ahler structure defined by Hitchin--Karlhede--Lindstr\"om--Ro\v{c}ek in \cite{hitchin:hkSUSY}. 
\end{corollary}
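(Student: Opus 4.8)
The goal is to reconcile two descriptions of the same pseudo-hyper-K\"ahler metric on $\M$: the one in \eqref{eq:twistMet}, built directly on $\M$ from the tensor $\tfrac12(1\otimes J_1' - J_1'\otimes 1)\omega_+$, and the one in \eqref{eq:twistMetHKLR}, built by Hitchin--Karlhede--Lindstr\"om--Ro\v{c}ek as a bilinear form on the space $H^0_\RR((T^{(1,0)}\M)_m)$ of real holomorphic sections of the twistor line. Since Theorem \ref{thm:twist}c already shows that applying our construction to $\varpi(\zeta)$ coming from a pseudo-hyper-K\"ahler structure recovers that structure, the only thing left is to check that the HKLR recipe produces the \emph{same} structure, and for this it suffices to match the two metrics (the complex structures and symplectic forms are then determined, e.g. via $J_3 = \omega_3^{-1}g$ and \eqref{eq:zetaKahler}).

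First I would invoke Lemma \ref{lem:realHol} to transport both metrics to the common model space $(T^{(1,0)}\M^{(0)})_m$: a real holomorphic section is $v(\zeta) = a - \zeta J_1\bar a$ with $a\in (T^{(1,0)}\M^{(0)})_m$, and the tangent space $H^0_\RR((T^{(1,0)}\M)_m)$ is thereby identified with $(T^{(1,0)}\M^{(0)})_m$, which in turn is identified with $T_m\M$ (a real vector $w\in T_m\M$ corresponds to its $(1,0)$-part $a = \tfrac12(w - iJ_3 w)$). Next, I would observe that the operator $j$ appearing in \eqref{eq:twistMetHKLR}, the antilinear square root of $-1$ whose fixed-point data cuts out the real holomorphic sections, is forced by \eqref{eq:realDual} to be $j = J_1\,\overline{\cdot}$ — this is exactly the identification recorded in the footnote to Lemma \ref{lem:realHol}. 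Substituting $j = J_1\,\overline{\cdot}$ into the right-hand side of \eqref{eq:twistMetHKLR} gives
\[
g(a-\zeta J_1\bar a,\, b-\zeta J_1\bar b) = \tfrac12\bigl(\omega_+(a, J_1\bar b) - \omega_+(J_1\bar a, b)\bigr).
\]

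Then I would compare this with \eqref{eq:twistMet}. The tensor $g$ in \eqref{eq:twistMet} is $\tfrac12(1\otimes J_1' - J_1'\otimes 1)\omega_+$ evaluated on a pair of real vectors; writing a real vector $w$ as $a + \bar a$ with $a$ its $(1,0)$-part and using that $\omega_+$ is a $(2,0)$-form (so $\omega_+$ vanishes whenever one of its arguments is of type $(0,1)$) together with $J_1 a \in T^{(0,1)}\M^{(0)}$ and $J_1\bar a\in T^{(1,0)}\M^{(0)}$, the eight terms collapse: $g(a+\bar a, b+\bar b) = \tfrac12\bigl(\omega_+(a+\bar a, J_1(b+\bar b)) - \omega_+(J_1(a+\bar a), b+\bar b)\bigr)$ reduces to $\tfrac12\bigl(\omega_+(a, J_1\bar b) - \omega_+(J_1\bar a, b)\bigr)$, which is precisely the displayed expression above. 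This is a short bilinear bookkeeping argument, and it is the only real content.

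\textbf{Main obstacle.} The genuine subtlety — and the reason the preceding two sections were needed — is \emph{bookkeeping the identifications}: the reality condition on $(T^{(1,0)}\M)_m$ is phrased via the antiholomorphic subbundle structure inside the trivial bundle $(T_\CC\M)_m$, whereas the holomorphicity and the $\Oo(1)$-trivialization are phrased via duality with $(T^*_{(1,0)}\M)_m$ (see Footnote \ref{foot:holvsantihol} and the discussion before Lemma \ref{lem:realHol}), and one must be careful that the parametrization $a \mapsto a - \zeta J_1\bar a$ of Lemma \ref{lem:realHol} is used consistently on both sides and that the $\zeta$-independent identification $H^0_\RR((T^{(1,0)}\M)_m)\cong (T^{(1,0)}\M^{(0)})_m\cong T_m\M$ is the one under which \eqref{eq:twistMetHKLR} is the HKLR metric. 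Once the dictionary is fixed, as I have done above, matching the two formulas is immediate; I therefore expect the corollary to follow in a paragraph, which is consistent with the terse "proof by comparison of \eqref{eq:twistMet} and \eqref{eq:twistMetHKLR}" that the surrounding text already sketches.
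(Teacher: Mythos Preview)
Your proposal is correct and follows essentially the same approach as the paper: identify $j=J_1\,\overline{\cdot}$ via Lemma \ref{lem:realHol}, substitute into \eqref{eq:twistMetHKLR}, and compare with \eqref{eq:twistMet}. The paper states the corollary immediately after the paragraph containing \eqref{eq:twistMetHKLR} and leaves the final comparison implicit; your explicit reduction using $\omega_+\in\Omega^{2,0}(\M^{(0)})$ and the type-swapping property of $J_1$ is exactly the bilinear bookkeeping the paper elides, and it is correct.
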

\bibliography{refs}

\end{document}